\documentclass[final,1p,times]{elsarticle}
\usepackage{amssymb}
\usepackage{amsmath}
\usepackage{amsthm}
\usepackage[utf8]{inputenc}
\usepackage{marginnote}
\usepackage{amsfonts}
\usepackage{textcomp}
\usepackage{color,soul}
\usepackage{placeins}
\usepackage{tikz}
\usepackage{makecell}
\usepackage{amssymb}
\usepackage{graphicx}
\usepackage{hyperref}
\usepackage[none]{hyphenat}
\usepackage{lipsum}
\newtheorem{thm}{Theorem}[section]
\newtheorem{lem}[thm]{Lemma}
\newtheorem{rem}[thm]{Remark}

\newtheorem{cor}[thm]{Corollary}
\newtheorem{example}[thm]{Example}
\newdefinition{defn}[thm]{Definition}
\allowdisplaybreaks
\numberwithin{equation}{section}
%\journal{Linear and Multilinear Algebra}
\begin{document}
	\begin{frontmatter}
		\title{Optimal dual pairs of frames for erasures}
		\author{S. Arati }
		\ead{aratishashi@iisertvm.ac.in}
		\author{P. Devaraj\corref{deva}}
		\ead{devarajp@iisertvm.ac.in}
		\author{Shankhadeep Mondal}
		\ead{shankhadeep16@iisertvm.ac.in}

		\address{School of Mathematics, Indian Institute of Science Education and Research Thiruvananthapuram, Maruthamala P.O, Vithura, Thiruvananthapuram-695551.}
		\cortext[deva]{Corresponding Author:devarajp@iisertvm.ac.in}

\begin{abstract}
The study involves characterizations of dual pairs of frames which are optimal to handle erasures among all dual pairs for a finite dimensional Hilbert space. A new optimality measure using  the Frobenius norm of the error operator has been introduced and the corresponding optimal dual pairs have been analyzed for any number of erasures. Also, other measures of the error operator, namely the spectral radius and the numerical radius, have been considered for the analysis. Besides, explicit construction of certain optimal dual pairs has been provided.
\end{abstract}

\begin{keyword}
erasures, frames, Frobenius norm, numerical radius, optimal dual pairs, spectral radius
\MSC[2010] 42C15, 46C05, 15A60
\end{keyword}

\end{frontmatter}
\section{Introduction}

Duffin and Schaeffer \cite{duff} introduced the notion of frames which generalizes  the concept of bases in a Hilbert space. A vector in a Hilbert space can have several representations using a sequence of frame elements.
Erasures of frame coefficients may occur when transmitting data, especially under challenging global network conditions like congestion and limitations in the transmission channel. The redundancy features of frames plays a crucial role in easing the effect of these erasures and improving the precision of reconstructing the original signal. Recent years have witnessed substantial research endeavors aimed at tackling erasure-related challenges through the application of frame theory.
\par
 Goyal et al. \cite{goya} have analyzed the optimality of unit norm tight frames for handling erasures in a coding theory perspective.  They have considered the mean-square error as a measure of optimality and  shown that for one erasure, a unit norm tight frame is optimal among all unit norm frames,  both in average and worst-case scenarios. Casazza and Kova{\v{c}}evi{\'c} \cite{casa2} have extensively investigated equal norm tight frames and analyzed their construction and robustness against erasures.  In a related study, Holmes and Paulsen \cite{holm} have introduced the concept of utilizing the operator norm of the error operator as an optimality measure and have classified those dual pairs which minimize the worst case error among all dual pairs of the form $(F,F)$ consisting of  Parseval frames and their canonical duals. A similar problem but with an average type optimality measure, considering various possible locations of erasures of a particular length, has been dealt with in \cite{bodm1}. In the context of a given frame, Lopez and Han \cite{jerr} have provided a sufficient condition for the canonical dual to be the only optimal dual with respect to the operator norm.  Furthermore, their study involves the analysis of  various topological properties of the collection of all optimal duals associated with a given frame. Moreover, in the context as described above, additional characterizations for optimality have been provided in \cite{jins}. Pehlivan et al. \cite{sali} have taken the spectral radius of the error operator as the measure of optimality and have given  characterizations of optimal dual frames  of  a given frame for one erasure. The  same problem associated with the occurrence  of  two erasures has  been  looked into  in \cite{peh, dev}. Subsequently, many other optimality measures have been considered for the analysis of optimal dual frames. For instance, numerical radius has been introduced as a measure in \cite{arab}, the average of numerical radius and operator norm of the error operator has been considered in \cite{deep} and seminorms of the error operator have been used in \cite{key}. Interestingly, the erasure problem has also been formulated by incorporating probabilities and weights associated with the erasures. We refer to \cite{leng2, leng3, li2, li, adm} in this connection.
\par
In the works discussed above, most of them deal with finding an optimal dual frame for a given fixed frame, under various measures of optimality. Some of them consider the optimal dual pair among pairs of Parseval frames and their canonical duals. In \cite{adm}, the optimality has been analyzed, while taking into account the larger collection of all possible pairs of frames and their duals. However, their study is confined to the cases of spectral radius and operator norm as the optimality measure. In this paper, we further extend the study of the deterministic erasure problem considering all dual pairs to other contexts, such as the Frobenius norm, which have not been studied so far as per our knowledge. More precisely, we give many characterizations of dual pairs which, among all pairs of frames and their duals, minimize the maximum of the Frobenius norm of the error operators associated with all possible locations of erasures of any arbitrary but fixed length. The characterizing conditions pertaining to the particular case of optimality of tight frames and their canonical duals, among all pairs, involve the well-known concept of equiangular frames, which is prominent in quantum information theory. Furthermore, we provide some results related to the spectral radius measure, which turn out to be an improvement of the corresponding results that can be obtained from \cite{adm}. In addition, we have made attempts to construct spectrally optimal dual pairs for both one and two erasures. Besides, we analyse the scenario wherein the numerical radius of the error operator measures the optimality, in the case of one erasure. These problems, associated with the three different measures, have been addressed in Sections 3, 4 and 5 respectively. A comparitive analysis between the three optimal duals is provided in Section 6. We shall now turn to Section 2 to recall the necessary background in connection with the problem of erasures and optimality in reconstruction.

\section{Preliminaries on erasures}\label{section2}
We shall denote an $n-$dimensional Hilbert space by $\mathcal{H}$. A finite \textit{frame} for $\mathcal{H}$ is a finite sequence $F= \{f_i\}_{i=1}^N$ in $\mathcal{H}$ such that
 $$\displaystyle{A \left\| f \right\|^2\leq \sum\limits_{i=1}^N \left| \langle f, f_i \rangle \right|^2 \leq B \left\| f \right\|^2}, \forall f\in \mathcal{H},$$
where $A$ and $B$ are  some positive constants and are called lower and upper frame bounds respectively. The supremum over all such lower bounds and the infimum over all such upper bounds are called the optimal frame bounds. If there exists a constant $\alpha$ such that  $\left\| f_i \right\|=\alpha $ for all $i$,
 then the frame  $F$ is called an \textit{equal norm frame}. An equal norm frame $F$ is said to be an \textit{equiangular frame} if there exists a constant $c$ such that $|\langle f_{i},f_{j}\rangle| =c $ for all $i\neq j.$
 A frame $F$ for $\mathcal{H}$ is called a \textit{tight frame} if there exists a positive constant  $A$ such that  $\sum\limits_{i=1}^N\big| \langle f,f_i \rangle \big|^2 = A\left\| f \right\|^2$ for all $f\in \mathcal{H},$  and is called a \textit{Parseval frame} if $\sum\limits_{i=1}^N\big| \langle f,f_i \rangle \big|^2 = \left\| f \right\|^2$ for all $f\in \mathcal{H}.$ There are three important operators, associated with each frame $F$ for $\mathcal{H}$:
\begin{itemize}
\item \textit{Analysis operator} $\Theta_F: \mathcal{H} \to {\mathbb{C}^N} $ given by $\Theta_F(f)= \{\langle f,f_i \rangle\}_{i=1}^N$,
\item Adjoint of $\Theta_F$, called \textit{synthesis operator} $\Theta_{F}^*: \mathbb{C}^N \to \mathcal{H} $ given by $\Theta_{F}^*\left(\{c_i\}_{i=1}^N\right) = \sum\limits_{i=1}^N {c_i f_i} .$
\item \textit{Frame operator} $S_F : \mathcal{H}\to \mathcal{H} $  defined by $S_{F}=\Theta_{F}^{*}\Theta_{F}.$
\end{itemize}
The explicit action of the frame operator is $ S_{F} f= \sum_{i=1}^N \langle f,f_i \rangle f_i$, $f\in\mathcal{H}$. It is an invertible, self-adjoint and positive operator on $\mathcal{H}.$ It is well-known that $\{S_{F}^{-1}f_i\}_{i=1}^{N}$ is  a frame for  $\mathcal{H}$ and is known as the canonical or standard dual of $F$. It provides the following reconstruction formula: $f= \sum\limits_{i=1}^N \langle f,S_{F}^{-1}f_i\rangle f_i,$ for all $f\in \mathcal{H}.$ In general, for a frame $F$ for $\mathcal{H},$ a sequence $G=\{g_i\}_{i=1}^N$ in $\mathcal{H}$ satisfying $f = \sum\limits_{i=1}^N\langle f,g_i \rangle f_i $ for all $f \in \mathcal{H},$ is called a dual frame of $F.$ In fact, computing the adjoint of the operator
$f\mapsto \sum\limits_{i=1}^N\langle f,g_i \rangle f_i $ suggests that $f = \sum\limits_{i=1}^N\langle f,f_i \rangle g_i $ for all $f \in \mathcal{H}.$  This implies that  $G$ spans $\mathcal{H},$ and hence
$G$ is also a frame for $\mathcal{H}$.
A pair $(F,G)$ comprising of a frame and its dual frame, having $N$ elements each, is said to be an $(N,n)$ dual pair for $\mathcal{H}.$  Every dual frame $G$ of a frame $F$ can be  expressed in terms of the canonical dual of $F$ as $G=\{S_F^{-1}f_i +u_i\}_{i=1}^N,$ where $\{u_i\}_{i=1}^N$ is a sequence in $\mathcal{H}$ satisfying $\sum_{i=1}^N \langle f,f_i \rangle u_i = \sum_{i=1}^N \langle f,u_i \rangle f_i = 0,$ for every $f\in \mathcal{H}.$ Moreover, for an $(N,n)$ dual pair, we have
\begin{equation} \label{eqn2point1}
\sum_{i=1}^N\langle g_i,f_i \rangle= tr(\Theta_F \Theta_{G}^*)=tr (\Theta_{G}^* \Theta_F)= tr(I)=n.
\end{equation}
For more details on frame theory, we refer to \cite{heil,CasaKuty,ole}.
\par
The error operator associated with the transmission loss of $m$, $1\leq m\leq N$, frame coefficients in the frame expansion involving an $(N,n)$ dual pair $(F,G)$ is defined by
$$E_{\Lambda,F,G}f:= \Theta_{G}^*D\Theta_{F} f=\sum_{i\in\Lambda} \langle f,f_i \rangle g_i,\quad f\in\mathcal{H},$$ where $\Lambda $ denotes the set of $m$ indices at which the coefficients are lost, $D=(d_{ij})$ is a diagonal matrix of order $N$ and  having entries $d_{ii}= 1$ for $i\in \Lambda$ and 0 otherwise. Considering all possible locations for the $m$ erasures and taking $${\cal A}_{m}=\left\{ \Lambda=\{i_1,i_2,\ldots,i_m\} \subseteq \{1,2, \ldots, N \}:i_1<i_2<\cdots <i_m\right\},$$
the maximum  error is
 $$ \max \bigg\{ \mathcal{M}( E_{\Lambda,F,G} ) : \Lambda \in {\cal A}_{m}  \bigg\}, $$
 where $\mathcal{M}$ is a fixed measure of the error operator. The objective is to find that dual pair $(F,G)$ which minimizes this maximum error among all $(N,n)$ dual pairs. In our study, we take three different choices of $\mathcal{M}$, namely the Frobenius norm, the spectral radius and the numerical radius. We require, for our analysis, the notions of $1-$uniform and $2-$uniform dual pairs, defined as follows.
\begin{defn}
A dual pair $(F,G)$ in a Hilbert space $\mathcal{H}$ is said to be  $1-$ uniform if there exists a constant $c'$ such that  $\langle f_i ,g_i \rangle =c' $  for all $i$. Moreover, this constant turns out to be $\frac{n}{N}.$
 \end{defn}

 \begin{defn}
 A dual pair $(F,G)$ in a Hilbert space $\mathcal{H}$ is said to be $2-$uniform if it is $1-$uniform and there exists a constant $c''$ such that   $\langle f_i ,g_j \rangle \langle f_j ,g_i \rangle=c''$  for $i \neq j$.
 \end{defn}

 \section{ Optimality analysis with respect to the Frobenius norm}
In this section, we analyze optimal dual pairs for erasures, using the Frobenius norm of the error operator as the optimality measure. Let $1 \leq m \leq N.$ For an $(N,n)$ dual pair $(F,G)$ for $\mathcal{H},$  we define $ \epsilon_{F,G}^{(m)} := \max \bigg\{ \left\| E_{{\Lambda,F,G}} \right\|_\mathcal{F} : \Lambda\in\mathcal{A}_m \bigg\},$ where  the Frobenius norm of an operator $T$ is given by $\|T\|_{\mathcal{F}} =\sqrt{tr(T^* T)}$ with $tr$ denoting the trace. Further,
\begin{align*}
& \epsilon^{(1)} := \inf \bigg\{ \epsilon_{F,G}^{(1)} : (F,G)\; \text{is an}\; (N,n)\; \text{dual pair for}\; \mathcal{H} \bigg\}, \\&{\mathcal{F}}^{(1)} := \left\{ (F',G') : \epsilon_{F',G'}^{(1)} = \epsilon^{(1)} \right\},\\&\epsilon^{(m)} := \inf \left\{ \epsilon_{F,G}^{(m)} : (F,G) \in {\mathcal{F}}^{(m-1)} \right\},\quad m>1,\\& {\mathcal{F}}^{(m)} := \left\{ (F',G')  \in {\mathcal{F}}^{(m-1)} : \epsilon_{F',G'}^{(m)} = \epsilon^{(m)} \right\}, \quad m>1.
\end{align*}
The members of ${\mathcal{F}}^{(m)}$ are called $m-$erasure optimal dual pairs under Frobenius norm. It is an easy observation that ${\mathcal{F}}^{(N)}={\mathcal{F}}^{(N-1)}.$ Our main goal is to characterize optimal dual pairs when there are $m$ erasures, where $m=1,2,\ldots,N$. The following theorem pertains to the above problem for one erasure.
\begin{thm} \label{prop3point2}
 The following hold :
\begin{enumerate}
	\item [{\em (i)}]  $\epsilon^{(1)} = \frac{n}{N}$ and ${\mathcal{F}}^{(1)} = \left\{(F,G) : \|f_i\|\,\|g_i\| = \frac{n}{N},\; 1 \leq i \leq N  \right\}.$
	\item [{\em (ii)}] If $F$ is a tight frame, then $(F, S_{F}^{-1}F) \in {\mathcal{F}}^{(1)} $ if and only if $F$ is an equal norm frame.
\end{enumerate}
\end{thm}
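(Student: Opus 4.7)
The plan is to exploit the rank-one structure of the single-erasure error operator together with the trace identity \eqref{eqn2point1}.

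For part (i), I would first note that $E_{\{i\},F,G}$ is the rank-one operator $f\mapsto\langle f,f_i\rangle g_i$, from which a short trace computation in any orthonormal basis gives $\|E_{\{i\},F,G}\|_{\mathcal{F}}^{2}=tr(E^{*}E)=\|f_i\|^{2}\|g_i\|^{2}$. Hence
\[
\epsilon_{F,G}^{(1)}=\max_{1\le i\le N}\|f_i\|\,\|g_i\|.
\]
The lower bound $\epsilon^{(1)}\ge n/N$ then follows by averaging: \eqref{eqn2point1} together with the triangle and Cauchy--Schwarz inequalities yields
\[
n=\Big|\sum_{i=1}^{N}\langle g_i,f_i\rangle\Big|\le\sum_{i=1}^{N}\|f_i\|\,\|g_i\|\le N\max_{1\le i\le N}\|f_i\|\,\|g_i\|.
\]
To conclude, I would exhibit an equal-norm Parseval frame paired with itself to show attainment, establishing $\epsilon^{(1)}=n/N$. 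Attainment forces the inequality chain above to collapse to equalities, which, combined with the upper bound $\max_i\|f_i\|\|g_i\|\le n/N$, pins down the pointwise conditions $\|f_i\|\,\|g_i\|=n/N$ for every $i$. The reverse implication is immediate from the formula for $\epsilon_{F,G}^{(1)}$.

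For part (ii), tightness of $F$ gives $S_F=AI$ for some $A>0$, so $\|S_F^{-1}f_i\|=A^{-1}\|f_i\|$. Applying part (i) to the canonical-dual pair, $(F,S_F^{-1}F)\in\mathcal{F}^{(1)}$ is equivalent to $\|f_i\|^{2}=An/N$ for every $i$. Since the trace identity $\sum_i\|f_i\|^{2}=tr(S_F)=An$ automatically normalises the common value once the norms are equal, this condition reduces exactly to $F$ being equal norm.

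No step here is expected to be a serious obstacle; the proof is essentially a rank-one Frobenius computation and an averaging inequality driven by \eqref{eqn2point1}. The only care required is producing an equal-norm Parseval example to guarantee that $\mathcal{F}^{(1)}$ is nonempty, and for this any equal-norm Parseval frame suffices.
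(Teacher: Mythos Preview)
Your proposal is correct and follows essentially the same route as the paper: compute $\|E_{\{i\},F,G}\|_{\mathcal{F}}=\|f_i\|\,\|g_i\|$ via a trace calculation, bound $\epsilon^{(1)}$ below by averaging against \eqref{eqn2point1}, realise the bound with an equal-norm Parseval frame, and then use the collapsed inequality chain to force the pointwise equalities; part (ii) is handled identically via $S_F=AI$ and the trace normalisation $\sum_i\|f_i\|^2=An$. The only cosmetic difference is that the paper invokes \cite{cass2} explicitly for the existence of an equal-norm Parseval frame, whereas you take this as known.
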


\begin{proof}
 (i) For any $(N,n)$ dual pair $(F,G)$\, for $\mathcal{H},$  we have  $n= \sum\limits_{i=1}^N \langle f_i , g_i \rangle \leq \sum\limits_{i=1}^N \|f_i\|\,\|g_i\|,$ by using \eqref{eqn2point1}. Subsequently, we get $\max\limits_{1 \leq i \leq N} \|f_i\|\,\|g_i\| \geq \frac{n}{N}. $ If the erasure occurs in the $i^{th}$ position, then an easy calculation leads to
    \begin{align*}
    \left\|E_{\Lambda,F,G}\right\|_\mathcal{F} = \sqrt{tr\left(\left(\Theta_{F}^*D\Theta_G\Theta_{G}^*\right)\left(D\Theta_F\right)\right)} = \sqrt{tr\left(D\Theta_F\Theta_{F}^*D\Theta_G\Theta_{G}^*\right)}  = \sqrt{\|f_i\|^2\|g_i\|^2} = \|f_i\|\;\|g_i\|
\end{align*}
and hence,
\begin{align}
\epsilon_{F,G}^{(1)} &=  \max\limits_{1\leq i \leq N} \|f_i\|\;\|g_i\|\label{e1fg formula}\\
&\geq \frac{n}{N}.\nonumber
\end{align}
 From
\cite[Theorem 2.1]{cass2}, by taking $S = I_{n \times n}$  and $a_i = \sqrt{\frac{n}{N}}$, $1 \leq i \leq N$, it can be inferred that there exists an equal norm Parseval frame $F'=\{f'_i\}$ with $\|f'_i\|=\sqrt{\frac{n}{N}}$. Now,
   $\epsilon_{F',S_{F'}^{-1}F'}^{(1)}= \epsilon_{F',F'}^{(1)} = \max\limits_{1\leq i \leq N}\|f'_i\|^2 = \frac{n}{N}.$
Hence, $\epsilon^{(1)} = \frac{n}{N}.$
\par
Now, from the above argument, we may write
 \begin{align*}
  {\mathcal{F}}^{(1)} = \left\{(F,G) :  \max\limits_{1 \leq i \leq N} \|f_i\|\;\|g_i\|= \frac{n}{N} \right\}.
\end{align*}
 Let $(F,G) \in {\mathcal{F}}^{(1)}. $ We know that $\|f_i\|\;\|g_i\|\leq \frac{n}{N}$, $\forall\, i$. If $\|f_j\|\,\|g_j\| < \frac{n}{N},$ for some $j,$ then $n = \sum\limits_{i=1}^N \langle f_i, g_i \rangle \leq \sum\limits_{i=1}^N \|f_i\|\,\|g_i\| < n,$ which is not possible. Therefore, $\|f_i\|\,\|g_i\| = \frac{n}{N},$ for $1 \leq i \leq N.$ Hence, we can conclude that $\mathcal{F}^{(1)} = \left\{(F,G) : \|f_i\|\,\|g_i\| = \frac{n}{N},\; 1 \leq i \leq N  \right\}.$\\
\noindent
(ii) Suppose $F$ is a tight frame, with the optimal frame bound $A$, such that  $(F, S_{F}^{-1}F) \in {\mathcal{F}}^{(1)}. $ Then, by (i), $ \|f_i\|^2 = \frac{An}{N},\,\forall i$. Hence, $F$ is an equal norm frame. Conversely, suppose $F$ is a tight frame and for some constant $c$, $\|f_i\| =c ,\;\forall i$. In view of  (i) and taking $A$ as earlier, we need to only show that $c^2 = \frac{An}{N}$, in order to obtain $(F, S_{F}^{-1}F) \in {\mathcal{F}}^{(1)}$. Taking $G$ to be the canonical dual of $F$ in \eqref{eqn2point1},  we have $n=\sum\limits_{i=1}^N \frac{1}{A}\|f_i\|^2 = \frac{N}{A}c^2.$  Therefore, $c= \sqrt{\frac{An}{N}}$, which proves (ii).
\end{proof}

From the above proof, we observe the following
\begin{rem}\label{equal norm PF exist}\quad

\begin{enumerate}
\item [{\em (i)}] There will always exist an equal norm Parseval frame for $\mathcal{H}$.
\item [{\em (ii)}] If $F=\{f_i\}_{i=1}^N$ is an equal norm tight frame with the optimal frame bound $A$, then $\|f_i\|=\sqrt{\frac{An}{N}}$, $\forall$ $1\leq i\leq N$.
\end{enumerate}
\end{rem}
Hence, we arrive at the result below.
\begin{cor}
The set ${\mathcal{F}}^{(1)}$ is nonempty.
\end{cor}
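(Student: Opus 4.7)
The plan is to simply exhibit an explicit element of $\mathcal{F}^{(1)}$, which establishes non-emptiness. The necessary construction has already been carried out inside the proof of Theorem \ref{prop3point2}(i) and recorded in Remark \ref{equal norm PF exist}(i), so the corollary is essentially a packaging step.

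First, I would invoke Remark \ref{equal norm PF exist}(i) to obtain an equal norm Parseval frame $F = \{f_i\}_{i=1}^N$ for $\mathcal{H}$. Since $F$ is Parseval, its frame operator is $S_F = I$, so the canonical dual $S_F^{-1}F$ coincides with $F$ itself, and $(F,F)$ is a legitimate $(N,n)$ dual pair. Because $F$ is tight and equal norm, Theorem \ref{prop3point2}(ii) (converse direction) immediately yields $(F,S_F^{-1}F) = (F,F) \in \mathcal{F}^{(1)}$, proving the claim.

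Alternatively, and perhaps more transparently, one can verify the membership directly from the characterization in Theorem \ref{prop3point2}(i): by Remark \ref{equal norm PF exist}(ii) applied with $A=1$, the norms satisfy $\|f_i\| = \sqrt{n/N}$ for every $i$, hence $\|f_i\|\,\|f_i\| = n/N$ for all $1 \leq i \leq N$, which is exactly the condition defining $\mathcal{F}^{(1)}$.

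There is no real obstacle here; the content of the corollary is that the infimum defining $\epsilon^{(1)}$ is actually attained, and attainment follows because the existence portion of the proof of Theorem \ref{prop3point2}(i) produced a concrete minimizer rather than merely an infimizing sequence. The only thing to make sure of is that the equal norm Parseval frame supplied by \cite{cass2} indeed exists for arbitrary $N \geq n$, which is already asserted in Remark \ref{equal norm PF exist}(i) and therefore may be used without further comment.
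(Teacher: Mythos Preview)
Your proposal is correct and matches the paper's approach exactly: the paper presents the corollary as an immediate consequence of Remark \ref{equal norm PF exist} (introduced with ``Hence, we arrive at the result below'') without a separate proof, and your argument simply unpacks that implication by exhibiting the equal norm Parseval frame and verifying membership in $\mathcal{F}^{(1)}$ via Theorem \ref{prop3point2}.
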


The next result delineates an important attribute of a dual pair that is $1-$erasure optimal under Frobenius norm.

\begin{cor}\label{thm5point4}
Every element in $ {\mathcal{F}}^{(1)}$ is a $1-$uniform dual pair for $\mathcal{H}.$
\end{cor}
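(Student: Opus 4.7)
The plan is to combine the characterization of $\mathcal{F}^{(1)}$ from Theorem~\ref{prop3point2}(i) with the trace identity~\eqref{eqn2point1} and Cauchy--Schwarz, and then exploit the resulting chain of equalities.

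First I would take an arbitrary $(F,G)\in\mathcal{F}^{(1)}$ and record the two facts I can use: from Theorem~\ref{prop3point2}(i), $\|f_i\|\,\|g_i\|=\tfrac{n}{N}$ for every $i$, and from~\eqref{eqn2point1}, $\sum_{i=1}^N\langle g_i,f_i\rangle=n$. The Cauchy--Schwarz inequality then gives $|\langle f_i,g_i\rangle|\le\|f_i\|\,\|g_i\|=\tfrac{n}{N}$ for each $i$.

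Next I would chain these through the real part. Since $\sum_{i=1}^N\langle f_i,g_i\rangle=n$ is real, we have
\[
n=\sum_{i=1}^N\operatorname{Re}\langle f_i,g_i\rangle\le\sum_{i=1}^N|\langle f_i,g_i\rangle|\le\sum_{i=1}^N\|f_i\|\,\|g_i\|=n.
\]
Equality must therefore hold at every step and term by term. The first equality forces $\operatorname{Re}\langle f_i,g_i\rangle=|\langle f_i,g_i\rangle|$, so each $\langle f_i,g_i\rangle$ is a nonnegative real number; the second equality forces $|\langle f_i,g_i\rangle|=\|f_i\|\,\|g_i\|=\tfrac{n}{N}$. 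Combining these two conclusions yields $\langle f_i,g_i\rangle=\tfrac{n}{N}$ for every $i$, which is exactly the definition of $1$-uniformity with the constant $\tfrac{n}{N}$ prescribed in Definition~2.1.

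There is no real obstacle here; the only subtle point to be careful about is ensuring that the argument uses the real part (not the modulus) to pass from the trace identity to the per-index equalities, because $\langle f_i,g_i\rangle$ is a priori complex. Once that is handled, the rest is an immediate squeeze from Cauchy--Schwarz.
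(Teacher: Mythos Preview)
Your proof is correct and follows essentially the same approach as the paper: both chain the trace identity through Cauchy--Schwarz to force termwise equality. The only cosmetic difference is that the paper decomposes $\langle f_j,g_j\rangle=a_j+ib_j$ and argues separately that $b_j=0$ and $a_j=\tfrac{n}{N}$, whereas you use the cleaner observation that $\operatorname{Re} z=|z|$ directly forces $z\ge 0$.
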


\begin{proof}
Let $(F,G) \in  {\mathcal{F}}^{(1)}.$ Then,
$ n= \sum\limits_{i=1}^N \langle f_i, g_i \rangle \leq \sum\limits_{i=1}^N \left|\langle f_i, g_i \rangle \right| \leq \sum\limits_{i=1}^N \|  f_i\|\,\| g_i\| =n ,$ by Theorem \ref{prop3point2}. Utilizing the fact that $\left|\langle f_i, g_i \rangle \right| \leq \|  f_i\|\,\| g_i\| , $ we can deduce that $\left|\langle f_i, g_i \rangle \right| = \|  f_i\|\,\| g_i\| = \frac{n}{N}, \forall\,i.$ All that is left is to show that the above relation holds even without the modulus. Let $\langle f_j, g_j \rangle = a_j + ib_j,\;1 \leq j \leq N$, where $a_j, b_j\in\mathbb{R}$. Then, $ n= \sum\limits_{j=1}^N \langle f_j, g_j \rangle$ implies $\sum\limits_{j=1}^N a_j =n, \sum\limits_{j=1}^N b_j= 0 $ and $\left|\langle f_j, g_j \rangle \right| = \frac{n}{N}$ implies $\sqrt{a_j^2 + b_j^2} = \frac{n}{N},$ for  $1\leq j \leq N.$ Therefore, $\sum\limits_{j=1}^N a_j = \sum\limits_{j=1}^N \sqrt{a_j^2 + b_j^2}, $ which implies that $b_j =0,\,\forall\,j.$ If any $a_j < \frac{n}{N},$ then there is at least one $\ell$ such that $a_\ell > \frac{n}{N}.$ Then, $|\langle f_\ell, g_\ell \rangle|=\sqrt{a_{\ell}^2 + b_{\ell}^2} > \frac{n}{N},$ which is not possible. Therefore, $a_j = \frac{n}{N}$ and $b_j = 0$ for  $1 \leq j \leq N.$ Consequently, $\langle f_j, g_j \rangle = \frac{n}{N},$ for  $1 \leq j \leq N.$
\end{proof}

Further, for a fixed tight frame, if the frame and its canonical dual is $1-$erasure optimal, then it turns out that this pair is indeed unique, as given below.

\begin{cor}
If $F$ is an equal norm tight frame for $\mathcal{H}$ and $(F,G)$ is a $1-$erasure optimal dual pair,  then $G = S_{F}^{-1}F.$
\end{cor}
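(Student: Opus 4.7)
The plan is to combine the three pieces of information already available about $1$-erasure optimal dual pairs with the special geometric rigidity of the Cauchy--Schwarz equality case. Concretely, I would proceed as follows.

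First, I would record the consequences of $F$ being an equal norm tight frame together with $(F,G)\in\mathcal{F}^{(1)}$. Let $A$ be the optimal (tight) frame bound. By Remark \ref{equal norm PF exist}(ii), $\|f_i\|=\sqrt{\tfrac{An}{N}}$ for every $i$. By Theorem \ref{prop3point2}(i), $\|f_i\|\,\|g_i\|=\tfrac{n}{N}$, whence $\|g_i\|=\sqrt{\tfrac{n}{NA}}$ for every $i$. Finally, Corollary \ref{thm5point4} gives the $1$-uniform identity $\langle f_i,g_i\rangle=\tfrac{n}{N}$ for every $i$.

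Next, I would invoke the equality case of Cauchy--Schwarz. For each index $i$ we have
\[
\langle f_i,g_i\rangle=\frac{n}{N}=\|f_i\|\,\|g_i\|,
\]
with $\langle f_i,g_i\rangle$ a real positive number equal to $\|f_i\|\|g_i\|$. Hence $g_i$ must be a nonnegative real scalar multiple of $f_i$, say $g_i=\lambda_i f_i$ with $\lambda_i\geq 0$. Substituting into $\langle f_i,g_i\rangle=\tfrac{n}{N}$ yields $\lambda_i\|f_i\|^2=\tfrac{n}{N}$, and using $\|f_i\|^2=\tfrac{An}{N}$ we obtain $\lambda_i=\tfrac{1}{A}$ for every $i$. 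Since the tightness of $F$ implies $S_F=AI$ and hence $S_F^{-1}f_i=\tfrac{1}{A}f_i$, this gives $g_i=S_F^{-1}f_i$ for all $i$, i.e. $G=S_F^{-1}F$.

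I do not anticipate a real obstacle; the argument is a clean chain of implications once Theorem \ref{prop3point2}, Remark \ref{equal norm PF exist}, and Corollary \ref{thm5point4} are in hand. The only point to be a little careful about is that Cauchy--Schwarz equality by itself only forces collinearity of $f_i$ and $g_i$, so one has to use the actual value $\tfrac{n}{N}$ (in particular its being real and positive) of $\langle f_i,g_i\rangle$ to pin down the proportionality constant to $\tfrac{1}{A}$ rather than merely to a unimodular multiple of it.
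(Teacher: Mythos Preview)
Your proof is correct and takes a genuinely different route from the paper. The paper writes $G=\{S_F^{-1}f_i+u_i\}$ using the general parametrization of duals, expands $\|\tfrac{1}{A}f_i+u_i\|^2=\tfrac{n}{AN}$, sums over $i$, and uses the trace identity $\sum_i\langle u_i,f_i\rangle=tr(\Theta_F\Theta_U^*)=tr(\Theta_U^*\Theta_F)=0$ to conclude $\sum_i\|u_i\|^2=0$, hence $u_i=0$. Your argument instead combines $\|f_i\|\|g_i\|=\tfrac{n}{N}$ with the $1$-uniform identity $\langle f_i,g_i\rangle=\tfrac{n}{N}$ from Corollary~\ref{thm5point4} and simply observes that this is the equality case of Cauchy--Schwarz, forcing $g_i=\tfrac{1}{A}f_i$. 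Your approach is shorter and more elementary, at the cost of invoking Corollary~\ref{thm5point4} (which the paper's proof avoids); the paper's approach is more self-contained but requires the dual decomposition and a trace computation.
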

\begin{proof}
Let $F$ be an equal norm tight frame with the optimal frame bound $A$ and $G$ be a dual of $F$ such that $(F, G)\in \mathcal{F}^{(1)}$. We know that $G=\{g_i\}_{i=1}^N=\left\{\frac{1}{A}f_i + u_i \right\}_{i=1}^N$ for some $U=\left\{u_i \right\}_{i=1}^N$ satisfying  $\sum\limits_{i=1}^{N}\langle f, f_{i} \rangle u_i=0, \; \forall\; f\in {\cal H}.$  From Theorem \ref{prop3point2},  we obtain
$\|f_i\|\left\| \frac{1}{A}f_i + u_i \right\|= \frac{n}{N}$ for all $1 \leq i \leq N.$  Further, by Remark \ref{equal norm PF exist}, we have $\|f_{i}\|=\sqrt{\frac{An}{N}}$  for every $i,$ and so we get $\left\| \frac{1}{A}f_i + u_i \right\| = \sqrt{\frac{n}{AN}},\;1 \leq i \leq N.$ Equivalently, $\frac{1}{A^2}\|f_i\|^2 + \frac{2}{A} Re\langle f_i, u_i \rangle + \|u_i\|^2 = \frac{n}{AN},\;1 \leq i \leq N,$ and hence $\frac{1}{A^2}\sum\limits_{1 \leq i \leq N} \|f_i\|^2 + \frac{2}{A} \sum\limits_{1 \leq i \leq N} Re\langle f_i, u_i \rangle + \sum\limits_{1 \leq i \leq N} \|u_i\|^2  = \frac{n}{A}.$ Consequently,
 \begin{align}\label{equation9}
 	\frac{2}{A} \sum\limits_{1 \leq i \leq N} Re\langle f_i, u_i \rangle + \sum\limits_{1 \leq i \leq N} \|u_i\|^2 = 0.
 \end{align}
Now,  $\sum\limits_{1\leq i \leq N} \langle f,f_i \rangle u_i =0,$ for all $f \in \mathcal{H}.$ In other words, $\Theta_{U}^* \Theta_F = 0.$  Therefore, $0 = tr(\Theta_{U}^* \Theta_F) = tr(\Theta_F \Theta_{U}^*) = \sum\limits_{1\leq i \leq N} \langle u_i, f_i \rangle $, and so from  \eqref{equation9}, we arrive at  $u_i = 0,$ for all $1 \leq i \leq N.$ Thus, $G = S_{F}^{-1}F $.
\end{proof}

Now, we shall give a characterization for an optimal dual pair when there are a higher number of erasures.

\begin{thm}\label{thm3point5}
 For $1 < m \leq N,\,$ $$\epsilon^{(m)} \geq \sqrt{m\left(\frac{n}{N}\right)^2 + \frac{nm(m-1)(N -n)}{N^2(N-1)}}.$$ Further, if there exists a dual pair $(F',G')\in {\mathcal{F}}^{(1)}$ such that  $Re\left(\langle g'_i, g'_j \rangle \langle f'_j, f'_i \rangle\right) $ is a constant, for all $i \neq j,$ then $\epsilon^{(m)} = \sqrt{m\left(\frac{n}{N}\right)^2 + \frac{nm(m-1)(N -n)}{N^2(N-1)}}$. Further,  $${\mathcal{F}}^{(m)}=\left\{ (F,G) \in {\mathcal{F}}^{(1)}: \text{there exists a constant $c$ such that } Re\left(  \langle g_i, g_j \rangle \langle f_j, f_i \rangle\right) = c, \forall \, i \neq j\right\}.$$
\end{thm}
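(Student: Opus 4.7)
The plan is to reduce $\|E_{\Lambda,F,G}\|_{\mathcal{F}}^{2}$ to a sum over Gramian entries and then to exploit an averaging argument over $\mathcal{A}_{m}$. Writing $E_{\Lambda,F,G}=\Theta_{G}^{*}D\Theta_{F}$ and applying the cyclic property of the trace (as in the proof of Theorem~\ref{prop3point2}), I would obtain
\[
\|E_{\Lambda,F,G}\|_{\mathcal{F}}^{2}=\operatorname{tr}\bigl(D\Theta_{F}\Theta_{F}^{*}D\Theta_{G}\Theta_{G}^{*}\bigr)=\sum_{i,j\in\Lambda}\langle f_{j},f_{i}\rangle\langle g_{i},g_{j}\rangle.
\]
For any $(F,G)\in\mathcal{F}^{(m-1)}\subseteq\mathcal{F}^{(1)}$, Theorem~\ref{prop3point2} forces the diagonal sum to equal $m(n/N)^{2}$, and the identity $\langle f_{i},f_{j}\rangle\langle g_{j},g_{i}\rangle=\overline{\langle f_{j},f_{i}\rangle\langle g_{i},g_{j}\rangle}$ organises the off-diagonal terms into conjugate pairs, giving
\[
\|E_{\Lambda,F,G}\|_{\mathcal{F}}^{2}=m\left(\frac{n}{N}\right)^{2}+\sum_{\substack{i,j\in\Lambda\\ i\neq j}}\operatorname{Re}\bigl(\langle g_{i},g_{j}\rangle\langle f_{j},f_{i}\rangle\bigr).
\]

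The lower bound then drops out of averaging. The key identity is to evaluate the same trace formula at $\Lambda=\{1,\ldots,N\}$: there $D=I$ and $E_{\Lambda,F,G}=\Theta_{G}^{*}\Theta_{F}=I_{\mathcal{H}}$, so $\sum_{i,j=1}^{N}\langle f_{j},f_{i}\rangle\langle g_{i},g_{j}\rangle=n$, and subtracting the diagonal $n^{2}/N$ yields $\sum_{i\neq j}\operatorname{Re}(\langle g_{i},g_{j}\rangle\langle f_{j},f_{i}\rangle)=n(N-n)/N$. Averaging $\|E_{\Lambda,F,G}\|_{\mathcal{F}}^{2}$ over $\mathcal{A}_{m}$, each unordered pair $\{i,j\}$ lies in exactly $\binom{N-2}{m-2}$ of the $\binom{N}{m}$ subsets, so the average equals $m(n/N)^{2}+\frac{nm(m-1)(N-n)}{N^{2}(N-1)}$; since the maximum dominates the average, the announced lower bound for $\epsilon^{(m)}$ follows. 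If moreover the hypothesised pair $(F',G')\in\mathcal{F}^{(1)}$ satisfies $\operatorname{Re}(\langle g'_{i},g'_{j}\rangle\langle f'_{j},f'_{i}\rangle)=c$ for all $i\neq j$, the identity above pins down $c=\frac{n(N-n)}{N^{2}(N-1)}$, and $\|E_{\Lambda,F',G'}\|_{\mathcal{F}}^{2}$ is independent of $\Lambda$ and matches the lower bound; hence $(F',G')\in\mathcal{F}^{(k)}$ for every $k\geq 1$ and $\epsilon^{(m)}$ equals the claimed value.

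For the characterization, the inclusion $\supseteq$ is the sufficiency argument just given. For $\subseteq$, I would exploit the nested chain $\mathcal{F}^{(m)}\subseteq\mathcal{F}^{(m-1)}\subseteq\cdots\subseteq\mathcal{F}^{(2)}$ and specialise to $m=2$: every $(F,G)\in\mathcal{F}^{(2)}$ has $\epsilon^{(2)}_{F,G}=\epsilon^{(2)}$ equal to the lower bound and hence to the average of $\|E_{\Lambda,F,G}\|_{\mathcal{F}}^{2}$ over pairs. Since any finite family of real numbers dominated by a value that coincides with their mean must be identically that value, $2(n/N)^{2}+2\operatorname{Re}(\langle g_{i},g_{j}\rangle\langle f_{j},f_{i}\rangle)$ is constant over $\{i,j\}$, which forces $\operatorname{Re}(\langle g_{i},g_{j}\rangle\langle f_{j},f_{i}\rangle)$ to be constant over all $i\neq j$. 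The main obstacle I anticipate is spotting the identity $\sum_{i,j}\langle f_{j},f_{i}\rangle\langle g_{i},g_{j}\rangle=n$; once one recognises this as $\|\Theta_{G}^{*}\Theta_{F}\|_{\mathcal{F}}^{2}=\|I\|_{\mathcal{F}}^{2}$, the averaging strategy is essentially forced and the rest of the argument is bookkeeping.
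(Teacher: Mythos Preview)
Your proposal is correct and follows essentially the same route as the paper: the trace computation for $\|E_{\Lambda,F,G}\|_{\mathcal{F}}^{2}$, the identity $\sum_{i,j}\langle g_i,g_j\rangle\langle f_j,f_i\rangle=n$ (which the paper derives via $\sum_i\langle g_i,f_i\rangle=n$ rather than $\|I\|_{\mathcal{F}}^2$, but this is the same), the averaging/counting argument (isolated in the paper as a separate combinatorial lemma), and the reduction of the $\subseteq$ inclusion to the case $m=2$ via the nesting $\mathcal{F}^{(m)}\subseteq\mathcal{F}^{(2)}$, where ``max equals mean forces all terms equal'' gives the constancy. The only differences are organisational.
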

In order to prove the above theorem, we make use of the following lemma.

\begin{lem}\label{lemma3point4}
	Let $\{a_{ij}:i,j=1,2,\ldots,N,\;i>j\}$ be a finite sequence of real numbers and for a fixed $m\in {\mathbb{N}},$ $1\le m\le N,$ let ${\cal A}_m$ be as in Section \ref{section2}. Then, $$\max\limits_{\Lambda\in {\cal A}_m}\;\; \sum_{\substack{j,k=1 \\ j>k}}^m a_{i_ji_k} \geq \frac{m(m-1)S}{N(N-1)},$$ where $S = \sum\limits_{\substack{i,j=1 \\ i>j}}^N a_{ij}.$
\end{lem}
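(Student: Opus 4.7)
The plan is to use a simple averaging (double counting) argument: the maximum over $\Lambda\in\mathcal{A}_m$ is at least the average over all $\Lambda\in\mathcal{A}_m$, and this average can be computed exactly.

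First I would consider the aggregate sum
\[
T \;=\; \sum_{\Lambda\in\mathcal{A}_m}\;\sum_{\substack{j,k=1\\ j>k}}^{m} a_{i_j i_k},
\]
and evaluate it by switching the order of summation. For each fixed pair $(p,q)$ with $1\le q<p\le N$, the term $a_{pq}$ contributes to $\Lambda$ precisely when both $p$ and $q$ lie in $\Lambda$; the number of $m$-element subsets of $\{1,\ldots,N\}$ containing a prescribed pair is $\binom{N-2}{m-2}$. Hence
\[
T \;=\; \binom{N-2}{m-2}\,S.
\]

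Next, since $|\mathcal{A}_m| = \binom{N}{m}$, the arithmetic mean of the quantities being maximized is
\[
\frac{T}{|\mathcal{A}_m|} \;=\; \frac{\binom{N-2}{m-2}}{\binom{N}{m}}\, S \;=\; \frac{m(m-1)}{N(N-1)}\,S,
\]
after the routine simplification of binomial coefficients. Because the maximum of a finite collection of real numbers is always at least its average, the conclusion
\[
\max_{\Lambda\in\mathcal{A}_m}\;\sum_{\substack{j,k=1\\ j>k}}^{m} a_{i_j i_k} \;\ge\; \frac{m(m-1)S}{N(N-1)}
\]
follows immediately.

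There is no real obstacle here; the only points requiring mild care are the combinatorial identity $\binom{N-2}{m-2}/\binom{N}{m}=m(m-1)/[N(N-1)]$ and the edge case $m=1$, where the inner sum is empty and both sides are $0$, so the inequality holds trivially. Note also that the sign of the $a_{ij}$ is irrelevant—the argument works even when $S\le 0$, in which case the right-hand side is non-positive and the bound is automatic.
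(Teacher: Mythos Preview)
Your proposal is correct and is essentially the same argument as the paper's: both compute the double sum $\sum_{\Lambda}\sum_{j>k}a_{i_ji_k}=\binom{N-2}{m-2}S$ and combine it with $|\mathcal{A}_m|=\binom{N}{m}$. The only cosmetic difference is that the paper phrases ``max $\geq$ average'' as a proof by contradiction, whereas you state it directly.
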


\begin{proof}
	Suppose $\max\limits_{\Lambda\in {\cal A}_m}\;\; \sum\limits_{\substack{j,k=1 \\ j>k}}^m a_{i_ji_k} < \frac{m(m-1)S}{N(N-1)}.$ Then, $\sum\limits_{\substack{j,k=1 \\ j>k}}^m a_{i_ji_k} < \frac{m(m-1)S}{N(N-1)},\,\forall \, \Lambda\in {\cal A}_m.$ Taking the sum on both sides, we have $\sum\limits_{\Lambda\in {\cal A}_m} \sum\limits_{\substack{j,k=1 \\ j>k}}^m a_{i_ji_k}< \frac{m(m-1)S}{N(N-1)}|{\cal A}_m|$. As $|{\cal A}_m|={{N}\choose{m}}$, it then implies that ${{N-2}\choose{m-2}}\sum\limits_{\substack{i,j=1 \\ i>j}}^N a_{ij}$ $< \frac{m(m-1)S}{N(N-1)}{{N}\choose{m}}={{N-2}\choose{m-2}}S$. This is not possible and hence, the lemma is proved.
\end{proof}

\begin{proof}[\textbf{Proof of Theorem \ref{thm3point5}}]
  Let $1 <m \leq N$ and  $(F,G) \in {\mathcal{F}}^{(m-1)}$. By Theorem \ref{prop3point2}, we have $\|f_i\|\,\|g_i\| = \frac{n}{N},\;\text{for}\, 1 \leq i \leq N. $
  Suppose erasures occur at the locations $i_1, i_2, \cdots ,i_m$, i.e., $\Lambda=\{i_1, i_2, \cdots ,i_m\}$. Then,
  \begin{align*} %\label{eqn3point6}
     \left\|E_{\Lambda, F,G}\right\|^2_\mathcal{F} &= \left\|\Theta_{G}^*D\Theta_F\right\|^2_\mathcal{F} \nonumber \\
     &=  tr(D\Theta_F\Theta_{F}^*D\Theta_G\Theta_{G}^*)  \nonumber \\
     &= \sum_{k=1}^m \langle g_{i_1}, g_{i_k} \rangle \langle f_{i_k}, f_{i_1} \rangle + \sum_{k=1}^m \langle g_{i_2}, g_{i_k} \rangle \langle f_{i_k}, f_{i_2} \rangle + \cdots + \sum_{k=1}^m \langle g_{i_m}, g_{i_k} \rangle \langle f_{i_k}, f_{i_m} \rangle \nonumber\\
     &= \sum_{j=1}^m \left\| g_{i_j} \right\|^2\left\| f_{i_j} \right\|^2 + \sum_{\substack{j,k=1 \\ j\neq k}}^m \langle g_{i_j}, g_{i_k} \rangle \langle f_{i_k}, f_{i_j} \rangle\nonumber\\
     &=m\left(\frac{n}{N}\right)^2 + 2 Re \left( \sum_{\substack{j,k=1 \\ j> k }}^m \langle g_{i_j}, g_{i_k} \rangle \langle f_{i_k}, f_{i_j} \rangle  \right).
  \end{align*}
  Therefore,
  \begin{align} \label{eqn3point7}
      \epsilon_{F,G}^{(m)} = \max\limits_{\Lambda\in\mathcal{A}_m} \left\|E_{\Lambda, F,G}\right\|_\mathcal{F} = \sqrt{m\left( \frac{n}{N} \right)^2 + \max\limits_{\Lambda\in {\cal A}_m} 2Re\left( \sum_{\substack{j,k=1 \\ j> k \\ }}^m \langle g_{i_j}, g_{i_k} \rangle \langle f_{i_k}, f_{i_j} \rangle \right)}.
  \end{align}
A lower estimate for $\max\limits_{\Lambda\in {\cal A}_m}2Re\left( \sum\limits_{\substack{j,k=1 \\ j> k \\ }}^m \langle g_{i_j}, g_{i_k} \rangle \langle f_{i_k}, f_{i_j} \rangle \right)$ can be obtained by appealing to Lemma \ref{lemma3point4}.
  It can be easily seen that $\sum\limits_{i,j=1}^N \langle g_i, g_j \rangle \langle f_j, f_i \rangle$ $= \sum\limits_{i=1}^N \left\langle g_i,\sum\limits_{j=1}^N \langle f_i, f_j \rangle g_j \right\rangle$ $= \sum\limits_{i=1}^N \langle g_i ,f_i \rangle =n. $ So,
  \begin{align}\label{equation3point8}
        2 Re \displaystyle{\left( \sum_{\substack{ i, j =1\\ i >j}}^N \langle g_i, g_j \rangle \langle f_j, f_i \rangle  \right)} = \sum\limits_{i \neq j}\langle g_i, g_j \rangle \langle f_j, f_i \rangle = n - \frac{n^2}{N},
  \end{align}
   and therefore $\max\limits_{\Lambda\in {\cal A}_m} 2Re\left( \sum\limits_{\substack{j,k=1 \\ j> k \\ }}^m \langle g_{i_j}, g_{i_k} \rangle \langle f_{i_k}, f_{i_j} \rangle \right) \geq  \frac{nm(m-1)(N -n)}{N^2(N-1)}.$  From \eqref{eqn3point7}, we then obtain
  $$\epsilon_{F,G}^{(m)} \geq  \sqrt{m\left(\frac{n}{N} \right)^2 + \frac{nm(m-1)(N -n)}{N^2(N-1)}}.$$  Thus, $\epsilon^{(m)} \geq \sqrt{m\left(\frac{n}{N} \right)^2 + \frac{nm(m-1)(N -n)}{N^2(N-1)}}.$
\par
Now, we shall assume the existence of a dual pair $(F', G') \in \mathcal{F}^{(1)}$ such that  $Re\left(\langle g'_i, g'_j \rangle \langle f'_j, f'_i \rangle\right) = c,$  for all $i \neq j$ and for some constant $c.$ Then, by  \eqref{equation3point8},   $c(N^2 - N) = n - \frac{n^2}{N},$ which implies $c= \frac{n(N -n)}{N^2(N-1)}.$ Let $\ell\in \mathbb{N}, \, 1<\ell\le m$. Substituting the value of $c$ in \eqref{eqn3point7}, we get $\epsilon_{F',G'}^{(\ell)} = \sqrt{\ell\left( \frac{n}{N} \right)^2 + \ell(\ell-1)c} = \sqrt{\ell\left(\frac{n}{N} \right)^2 + \frac{n\ell(\ell-1)(N -n)}{N^2(N-1)}}.$ As $(F', G') \in \mathcal{F}^{(\ell-1)}$, we obtain $\epsilon^{(\ell)} = \sqrt{\ell\left(\frac{n}{N}\right)^2 + \frac{n\ell(\ell-1)(N -n)}{N^2(N-1)}}$ and $(F', G') \in \mathcal{F}^{(\ell)}.$ Therefore,
  \begin{align*}
     \left\{ (F,G) \in {\mathcal{F}}^{(1)}: \text{there exists a constant $c$ such that } Re\left(\langle g_i, g_j \rangle \langle f_j, f_i \rangle\right)   = c, \forall \, i \neq j \right\}\subset {\mathcal{F}}^{(m)}.
  \end{align*}
Now, suppose $(F,G) \in {\mathcal{F}}^{(m)}$. Using \eqref{eqn3point7} and the value of $\epsilon^{(m)}$, we may deduce that
\begin{align*}
\max\limits_{\Lambda\in {\cal A}_m} 2Re\left( \sum_{\substack{j,k=1 \\ j> k \\ }}^m \langle g_{i_j}, g_{i_k} \rangle \langle f_{i_k}, f_{i_j} \rangle \right)
=\frac{nm(m-1)(N -n)}{N^2(N-1)},
\end{align*}
which further implies that
\[2Re\left( \sum_{\substack{j,k=1 \\ j> k \\ }}^m \langle g_{i_j}, g_{i_k} \rangle \langle f_{i_k}, f_{i_j} \rangle \right)
\leq\frac{nm(m-1)(N -n)}{N^2(N-1)},\,\forall\,\Lambda\in {\cal A}_m. \]
Now, suppose the above inequality is strict for some $\Lambda_{0}\in {\cal A}_m$. Then, taking the sum over all members of ${\cal A}_m$, we get
\begin{align*}
\sum_{\Lambda\in {\cal A}_m}2Re\left( \sum_{\substack{j,k=1 \\ j> k \\ }}^m \langle g_{i_j}, g_{i_k} \rangle \langle f_{i_k}, f_{i_j} \rangle \right)
< \frac{nm(m-1)(N -n)}{N^2(N-1)}|{\cal A}_m|=\frac{n(N-n)}{N}{{N-2}\choose{m-2}}.
\end{align*}
However,
\begin{align*}
\sum_{\Lambda\in {\cal A}_m}2Re\left( \sum_{\substack{j,k=1 \\ j> k \\ }}^m \langle g_{i_j}, g_{i_k} \rangle \langle f_{i_k}, f_{i_j} \rangle \right)
&= {{N-2}\choose{m-2}}2Re\left( \sum_{\substack{i,j=1\\ i> j \\ }}^N \langle g_{i}, g_{j} \rangle \langle f_{j}, f_{i} \rangle \right)
={{N-2}\choose{m-2}}\frac{n(N-n)}{N},
\end{align*}
using \eqref{equation3point8}. This contradiction proves that
 \begin{align}\label{eqn11}
  \sum_{\substack{j,k=1 \\ j\neq k \\ }}^m \langle g_{i_j}, g_{i_k} \rangle \langle f_{i_k}, f_{i_j} \rangle
=\frac{nm(m-1)(N -n)}{N^2(N-1)},\,\forall\,\Lambda\in {\cal A}_m.
\end{align}
We observe that \eqref{eqn11} in fact holds for every $\ell\in \mathbb{N}, \,1<\ell\le m$
and making use of the expression for $\ell=2$, we may conclude that \[Re\left(\langle g_{i}, g_{j} \rangle \langle f_{j}, f_{i} \rangle\right)=\frac{\langle g_{i}, g_{j} \rangle \langle f_{j}, f_{i} \rangle + \langle g_{j}, g_{i} \rangle \langle f_{i}, f_{j} \rangle}{2}	=\frac{n(N -n)}{N^2(N-1)},\;\forall i \neq j.\]
Therefore,
 \begin{align*}
 {\mathcal{F}}^{(m)} = 	\left\{ (F,G) \in {\mathcal{F}}^{(1)}: \text{there exists a constant $c$ such that }  Re\left( \langle g_i, g_j \rangle\langle f_j, f_i \rangle \right) = c, \forall \, i \neq j\right\}.\qquad\qedhere
\end{align*}

\end{proof}
An easy consequence of Theorems \ref{prop3point2} and \ref{thm3point5} is the following
result, which gives a necessary and  sufficient condition for  a pair of a tight frame and its canonical dual to be an m$-$erasure optimal dual pair for every $m$.

\begin{thm}
Let $F$ be a tight frame for $\mathcal{H}.$ Suppose there exists a dual pair $(F',G') \in \mathcal{F}^{(1)}$ such that $Re\left( \langle g'_i, g'_j \rangle\langle f'_j, f'_i \rangle\right)=c $ for all $i \neq j,$ and for   some constant $c$. Then, $(F,S_{F}^{-1}F)$ is an $m$-erasure optimal dual pair for every $m \in \{1,2,\ldots,N\}$ if and only if $(F, S_{F}^{-1}F)$ is a $2-$uniform dual pair or equivalently, $F$ is an equiangular frame.
\end{thm}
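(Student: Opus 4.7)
Since $F$ is a tight frame with optimal frame bound $A$, its canonical dual satisfies $g_i = A^{-1}f_i$. Under this identification, every scalar product appearing in the earlier characterizations collapses to an expression in $F$ alone: $\langle f_i,g_i\rangle = A^{-1}\|f_i\|^2$, and for $i\ne j$,
\[
\langle g_i,g_j\rangle\langle f_j,f_i\rangle \;=\; A^{-2}|\langle f_i,f_j\rangle|^2 \;=\; \langle f_i,g_j\rangle\langle f_j,g_i\rangle.
\]
In particular, all of these quantities are already real, so the ``$Re$'' appearing in Theorem \ref{thm3point5} drops off harmlessly, and the real-part constancy condition there becomes simply ``$|\langle f_i,f_j\rangle|$ is constant for $i\ne j$''. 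This reduction is the only mildly non-routine point of the argument and drives both equivalences.

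To prove that $(F,S_F^{-1}F)$ is $m$-erasure optimal for every $m$ if and only if $F$ is equiangular, I would invoke the hypothesis on $(F',G')$, which is exactly what is needed for Theorem \ref{thm3point5} to characterize $\mathcal{F}^{(m)}$. Thus $(F,S_F^{-1}F)\in\mathcal{F}^{(m)}$ for every $m\in\{1,\ldots,N\}$ if and only if $(F,S_F^{-1}F)\in\mathcal{F}^{(1)}$ and $Re\bigl(\langle g_i,g_j\rangle\langle f_j,f_i\rangle\bigr)$ is constant on $i\ne j$. By Theorem \ref{prop3point2}(ii), the former condition is equivalent to $F$ being equal norm, and by the computation above, the latter is equivalent to $|\langle f_i,f_j\rangle|$ being constant. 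The conjunction of the two is precisely the definition of an equiangular frame.

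For the equivalence with $2$-uniformity, I would unpack the two defining conditions through the same substitution. The $1$-uniform relation $\langle f_i,g_i\rangle = n/N$ becomes $\|f_i\|^2 = An/N$ for every $i$, which by Remark \ref{equal norm PF exist}(ii) is equivalent to $F$ being equal norm. The additional $2$-uniform requirement $\langle f_i,g_j\rangle\langle f_j,g_i\rangle = c''$ for $i\ne j$ translates to $|\langle f_i,f_j\rangle|^2 = A^2 c''$, i.e.\ constancy of $|\langle f_i,f_j\rangle|$. Together, these reproduce the equiangular condition, closing the three-way equivalence. The argument is essentially a bookkeeping exercise stitching together Theorems \ref{prop3point2} and \ref{thm3point5} through the canonical-dual identity $g_i = A^{-1}f_i$, so I do not anticipate any substantive obstacle beyond recording these translations cleanly.
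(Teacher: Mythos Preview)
Your proposal is correct and follows exactly the route the paper indicates: the paper does not spell out a proof but simply states that the theorem is an easy consequence of Theorems~\ref{prop3point2} and~\ref{thm3point5}, which is precisely what you unpack via the substitution $g_i=A^{-1}f_i$ and the resulting identity $\langle g_i,g_j\rangle\langle f_j,f_i\rangle=A^{-2}|\langle f_i,f_j\rangle|^2$.
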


\begin{thm}
	Let $(F,G) $ be a dual pair in $\mathcal{H}$ and $U$ be a unitary operator on $\mathcal{H}.$ Then, $(F,G) \in \mathcal{F}^{(m)}$ if and only if $(UF,UG) \in  \mathcal{F}^{(m)}.$
\end{thm}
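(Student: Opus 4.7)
The plan is to exploit the unitary invariance of all the quantities involved, and then induct on $m$. First I would verify that $(UF,UG)$ is itself a dual pair: for any $f\in\mathcal{H}$, writing $f = UU^*f$ and using the dual pair property of $(F,G)$ applied to $U^*f$, one obtains $\sum_{i=1}^N\langle f,Uf_i\rangle Ug_i = U\sum_{i=1}^N\langle U^*f,f_i\rangle g_i = UU^*f = f$, and symmetrically with the roles of $F$ and $G$ interchanged. Since $U$ is invertible, the map $(F,G)\mapsto (UF,UG)$ is a bijection on the collection of all $(N,n)$ dual pairs for $\mathcal{H}$, with inverse $(F,G)\mapsto(U^*F,U^*G)$.

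The central computation is to show that the error operators are unitarily conjugate. For any $\Lambda\in\mathcal{A}_m$ and $f\in\mathcal{H}$,
\begin{equation*}
E_{\Lambda,UF,UG}f=\sum_{i\in\Lambda}\langle f,Uf_i\rangle Ug_i = U\sum_{i\in\Lambda}\langle U^*f,f_i\rangle g_i = U E_{\Lambda,F,G}U^*f,
\end{equation*}
so $E_{\Lambda,UF,UG}=U E_{\Lambda,F,G}U^*$. Because the Frobenius norm is unitarily invariant,
$\|E_{\Lambda,UF,UG}\|_{\mathcal{F}} = \|E_{\Lambda,F,G}\|_{\mathcal{F}}$ for every $\Lambda\in\mathcal{A}_m$, and taking maxima yields $\epsilon_{UF,UG}^{(k)} = \epsilon_{F,G}^{(k)}$ for every $k$.

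It remains to carry the equality into the recursive definition of $\mathcal{F}^{(m)}$ by induction on $m$. For $m=1$, both pairs give the same value of $\epsilon_{\cdot,\cdot}^{(1)}$, hence one lies in $\mathcal{F}^{(1)}$ iff the other does. Assuming the equivalence for $m-1$, the bijection $(F,G)\mapsto (UF,UG)$ restricts to a bijection on $\mathcal{F}^{(m-1)}$; since it preserves $\epsilon_{\cdot,\cdot}^{(m)}$, the infima defining $\epsilon^{(m)}$ coincide and the minimizers are mapped to minimizers, giving $(F,G)\in\mathcal{F}^{(m)}$ iff $(UF,UG)\in\mathcal{F}^{(m)}$.

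The only subtle point is really the inductive bookkeeping, because $\mathcal{F}^{(m)}$ is defined as an iterated minimization rather than a single optimization; I expect this to be the only step that requires care, and it is handled cleanly by noting that the unitary action is a bijection at every stage and preserves the optimality measure exactly.
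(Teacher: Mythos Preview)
Your proof is correct, but it takes a somewhat different route from the paper's. The paper's argument leans on the explicit formulas already established: it uses \eqref{e1fg formula} to write $\epsilon_{F,G}^{(1)}=\max_i\|f_i\|\,\|g_i\|$ and \eqref{eqn3point7} to express $\epsilon_{F,G}^{(m)}$ (for pairs in $\mathcal{F}^{(1)}$) in terms of the inner products $\langle g_{i_j},g_{i_k}\rangle\langle f_{i_k},f_{i_j}\rangle$, and then observes that norms and inner products are preserved by $U$; the inductive bookkeeping through the nested definitions of $\mathcal{F}^{(m)}$ is left implicit in ``The result then follows.'' You instead show directly that $E_{\Lambda,UF,UG}=U E_{\Lambda,F,G}U^*$ and invoke unitary invariance of the Frobenius norm, and you spell out the induction explicitly. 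Your approach is more self-contained (it does not rely on Theorems~\ref{prop3point2} and~\ref{thm3point5}) and more robust: the same argument applies verbatim to any unitarily invariant measure of the error operator, so it would simultaneously cover the analogous statements for $\mathcal{R}^{(m)}$ and $\mathcal{N}^{(m)}$ without separate computations. The paper's version, on the other hand, is shorter once those formulas are in hand.
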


\begin{proof}
Using \eqref{e1fg formula} and  \eqref{eqn3point7}, it is easy to observe that
	\begin{align*}
		\epsilon_{F,G}^{(1)} = \max\limits_{1 \leq i \leq N} \|f_i\|\,\|g_i\| = \max\limits_{1 \leq i \leq N} \|Uf_i\|\,\|Ug_i\| = \epsilon_{UF,UG}^{(1)}
	\end{align*}
and for $1 <m \leq N,$
\begin{align*}
	\epsilon_{F,G}^{(m)} &= \sqrt{m\left( \frac{n}{N} \right)^2 + \max\limits_{\Lambda\in {\cal A}_m} 2Re\left( \sum_{\substack{j,k=1 \\ j> k \\ }}^m \langle g_{i_j}, g_{i_k} \rangle \langle f_{i_k}, f_{i_j} \rangle \right)} \\ &= \sqrt{m\left( \frac{n}{N} \right)^2 + \max\limits_{\Lambda\in {\cal A}_m} 2Re\left( \sum_{\substack{j,k=1 \\ j> k \\ }}^m \langle Ug_{i_j}, Ug_{i_k} \rangle \langle Uf_{i_k}, Uf_{i_j} \rangle \right)} \\&= \epsilon_{UF,UG}^{(m)}.
\end{align*}
The result then follows.
\end{proof}

\section{Optimality analysis with respect to spectral radius}
Here, the spectral radius of the error operator is taken to be the measure of optimality for the analysis of  optimal  dual pairs.  For an $(N,n)$ dual pair $(F,G),$  we define
$r_{F,G}^{(m)}:=$ $\max \left\{ \rho( E_{\Lambda,F,G} ) : \Lambda\in\mathcal{A}_m \right\}$, $1 \leq m \leq N,$ where $\rho$ denotes the spectral radius. Now, let
\begin{align*}
& r^{(1)} := \inf \bigg\{ r_{F,G}^{(1)} : (F,G)\; \text{is an}\; (N,n)\; \text{dual pair for }\; \mathcal{H} \bigg\}, \\
&\mathcal{R}^{(1)} := \left\{ (F,G) : r_{F,G}^{(1)} = r^{(1)} \right\}, \mbox{ and}\\
%\end{align*}
\hspace*{-4in}\mbox{ for } 1<m\le N,&\\
%\begin{align*}
&r^{(m)} := \inf \left\{ r_{F,G}^{(m)} : (F,G) \in \mathcal{R}^{(m-1)} \right\},\\& \mathcal{R}^{(m)} := \left\{ (F,G)  \in \mathcal{R}^{(m-1)} : r_{F,G}^{(m)} = r^{(m)} \right\}.
\end{align*}
The dual pairs in ${\mathcal{R}}^{(m)}$ are said to be $m-$erasure spectrally optimal dual pairs. It is easy to see that $r_{F,G}^{(1)} = \max\limits_{1 \leq i \leq N} \left| \langle f_i , g_i \rangle \right| $. Also, using  \eqref{eqn2point1}, we get $\sum\limits_{i=1}^{N} |\langle f_i, g_i \rangle| \geq \left|\sum\limits_{i=1}^{N} \langle f_i, g_i \rangle \right|= n$. It then follows that $r_{F,G}^{(1)} \geq \frac{n}{N}$ and hence, $ r^{(1)} \geq \frac{n}{N}.$ We shall show that $ r^{(1)} = \frac{n}{N}$, by constructing a family of dual pairs $(F,G)$ for which $r_{F,G}^{(1)} = \frac{n}{N}. $ From \cite[Theorem 3.9]{adm}, the existence of a $1-$erasure spectrally optimal dual pair of the form $(F,F)$, where $F$ is a Parseval frame, can be shown. Here, an explicit construction of a $1-$erasure spectrally optimal dual pair $(F,G)$, where $F$ is not a Parseval frame, has been provided.

\begin{thm}\label{lem4point2}
 There exists an $(N,n)$ dual pair $(F,G)$  for ${\cal H}$ such that  $r_{F,G}^{(1)} = \frac{n}{N}. $
\end{thm}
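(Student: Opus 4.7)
The plan is to produce an explicit construction by starting from the equal norm Parseval frame guaranteed by Remark \ref{equal norm PF exist} and then rescaling it to break the Parseval property while preserving the key inner product values. Since $r_{F,G}^{(1)}=\max_{1\le i\le N}|\langle f_i,g_i\rangle|$ and we have already shown $r^{(1)}\ge \frac{n}{N}$, it suffices to exhibit a dual pair for which $|\langle f_i,g_i\rangle|=\frac{n}{N}$ for every $i$.

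First I would invoke Remark \ref{equal norm PF exist} to pick an equal norm Parseval frame $E=\{e_i\}_{i=1}^N$ for $\mathcal{H}$ with $\|e_i\|=\sqrt{n/N}$ for all $i$. Then I would fix a scalar $\alpha>0$ with $\alpha\ne 1$ and set
\[
F=\{f_i\}_{i=1}^N=\{\alpha e_i\}_{i=1}^N,\qquad G=\{g_i\}_{i=1}^N=\Bigl\{\tfrac{1}{\alpha}e_i\Bigr\}_{i=1}^N.
\]
Next I would verify that $(F,G)$ is indeed an $(N,n)$ dual pair: for every $f\in\mathcal{H}$,
\[
\sum_{i=1}^{N}\langle f,g_i\rangle f_i=\sum_{i=1}^{N}\Bigl\langle f,\tfrac{1}{\alpha}e_i\Bigr\rangle\alpha e_i=\sum_{i=1}^{N}\langle f,e_i\rangle e_i=f,
\]
since $E$ is Parseval. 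In fact $S_F=\alpha^2 I$, which shows additionally that $F$ is a tight but non-Parseval frame (because $\alpha\ne 1$), as advertised in the paragraph preceding the theorem, and that $G=S_F^{-1}F$ is the canonical dual.

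Finally I would compute, for every $i$,
\[
\langle f_i,g_i\rangle=\Bigl\langle \alpha e_i,\tfrac{1}{\alpha}e_i\Bigr\rangle=\|e_i\|^2=\tfrac{n}{N},
\]
so that $r_{F,G}^{(1)}=\max_{1\le i\le N}|\langle f_i,g_i\rangle|=\frac{n}{N}$, as required. There is essentially no obstacle here; the only subtle point is making sure Remark \ref{equal norm PF exist} supplies an equal norm Parseval frame with exactly the norm $\sqrt{n/N}$, so that the scaling by $\alpha$ produces the right inner product value $n/N$ while simultaneously guaranteeing $F$ is not Parseval.
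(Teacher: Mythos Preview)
Your argument is correct: scaling an equal norm Parseval frame $E$ by $\alpha$ and pairing it with its canonical dual $\alpha^{-1}E$ yields $\langle f_i,g_i\rangle=\|e_i\|^2=n/N$ for every $i$, so $r_{F,G}^{(1)}=n/N$. The invocation of Remark~\ref{equal norm PF exist} is legitimate, since part~(ii) with $A=1$ forces $\|e_i\|=\sqrt{n/N}$.

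The paper takes a different route. Rather than appealing to the existence of an equal norm Parseval frame, it writes down a completely explicit pair: it extends an orthonormal basis $\{e_1,\dots,e_n\}$ by $N-n$ copies of $\sum_j e_j$ to form $F$, and then exhibits by formula a particular dual $G$ (not the canonical one) for which $\langle f_i,g_i\rangle=n/N$ holds for all $i$. In that construction $F$ is not even tight when $N>n$, whereas your $F$ is tight with bound $\alpha^2$ and $G$ is its canonical dual. Your approach is shorter and conceptually cleaner, but it leans on the Casazza--Leon existence result hidden inside Remark~\ref{equal norm PF exist}; the paper's construction is self-contained and fully concrete, and it also feeds directly into the later Theorem on $2$-uniform $(n+1,n)$ dual pairs, where this specific $(F,G)$ is reused.
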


\begin{proof}
Let   $\{ e_1,e_2,\ldots,e_n \} $ be an orthonormal basis  of $\mathcal{H}.$ If $ N > n,$ then we extend this orthonormal basis to a frame $F = \{f_i\}_{i=1}^N$  for $\mathcal{H}$ by defining
\[
f_i := \begin{cases} e_i,\; & 1\leq i \leq n,\\
	\sum\limits_{j=1}^n e_j, \;\;& n+1 \leq i \leq N.
	\end{cases}
\]
Consider the sequence $G=\{g_i\}_{i=1}^N$, where
\begin{eqnarray*}
g_i := \begin{cases} e_i-\frac{N-n}{N}\sum\limits_{j=1}^n e_j,\; & 1\leq i \leq n,\\
	\frac{1}{N}\sum\limits_{j=1}^n e_j, \;\;& n+1 \leq i \leq N.
	\end{cases}
\end{eqnarray*}
For $f\in\mathcal{H}$,
\begin{align*}
\sum_{i=1}^N \langle f,f_i\rangle g_i
&=\sum_{i=1}^n \langle f,e_i\rangle \left(e_i-\frac{N-n}{N}\sum\limits_{j=1}^n e_j\right) + \sum_{i=n+1}^N\left\langle f, \sum_{j=1}^n e_j \right\rangle\left(\frac{1}{N}\sum_{j=1}^n e_j\right)=f,
\end{align*}
which proves that $(F,G)$  is an $(N,n)$ dual pair. Further, it can be easily seen that $\langle f_i,g_i\rangle=\frac{n}{N}$, for $1\leq i\leq N$. So, $r_{F,G}^{(1)} =  \frac{n}{N}.$
  If $ N=n $,  then we take $G$ to be  the canonical dual of $F=\{e_i\}_{i=1}^N,$ which is $F$ itself. Now, $ r_{({F,S_{F}^{-1}F})}^{(1)} = \max\limits_{1 \leq i \leq N} \|f_i  \|^2 =1=\frac{n}{N}.$
\end{proof}
Using the existence of such a dual pair, one can prove the following theorem, which also provides a  characterization for $1-$erasure spectrally optimal dual pairs, along similar lines of proof of \cite[Theorem 3.8]{adm}. Alternatively, we can obtain the result by specifying $q_i=\frac{N}{n},\forall i$ in \cite[Theorem 3.8]{adm}.
\begin{thm}\label{thm4point3}
The value of $r^{(1)}$ is $\frac{n}{N}.$ Moreover, a dual pair $(F,G) $ in $\mathcal{H}$ is $1-$erasure spectrally optimal if and only if it is $1-$uniform.
\end{thm}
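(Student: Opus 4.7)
The plan is to split the statement into its two assertions and handle each directly, relying on the identity $r_{F,G}^{(1)} = \max_{1\le i\le N}|\langle f_i,g_i\rangle|$ and the trace identity \eqref{eqn2point1} that are already recorded just above the theorem.

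For $r^{(1)} = \frac{n}{N}$: the lower bound $r^{(1)}\ge \frac{n}{N}$ is literally the observation in the paragraph preceding the statement, since $\sum_i |\langle f_i,g_i\rangle| \ge |\sum_i \langle f_i,g_i\rangle| = n$ forces at least one summand to be $\ge \frac{n}{N}$. The matching upper bound is supplied by Theorem \ref{lem4point2}, which exhibits a dual pair attaining $\frac{n}{N}$. So this half requires no new work.

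For the characterization, the easy direction is immediate: a $1$-uniform dual pair satisfies $\langle f_i,g_i\rangle = \frac{n}{N}$ for every $i$, so $r_{F,G}^{(1)} = \frac{n}{N} = r^{(1)}$ and $(F,G)\in \mathcal{R}^{(1)}$. For the converse, I would mirror the endgame of Corollary \ref{thm5point4}. Starting from $(F,G)\in \mathcal{R}^{(1)}$, the hypothesis gives $|\langle f_i,g_i\rangle|\le \frac{n}{N}$ for every $i$. Write $\langle f_i,g_i\rangle = a_i + ib_i$ with $a_i,b_i\in\mathbb{R}$; the trace identity yields $\sum_i a_i = n$ and $\sum_i b_i = 0$, while the modulus bound gives $a_i\le \sqrt{a_i^2+b_i^2}\le \frac{n}{N}$. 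Summing the coordinate-wise inequality $a_i\le \frac{n}{N}$ and comparing with $\sum_i a_i = n$ forces $a_i = \frac{n}{N}$ for every $i$; feeding this back into $a_i = \sqrt{a_i^2 + b_i^2}$ forces $b_i = 0$. Hence $\langle f_i,g_i\rangle = \frac{n}{N}$ for all $i$, which is exactly $1$-uniformity.

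I do not expect a substantive obstacle. All the sharpness is absorbed by Theorem \ref{lem4point2}, and what remains is the standard real/imaginary split that the paper has already deployed in Corollary \ref{thm5point4}; the only care is in tracking the equality cases. As the paragraph preceding the theorem notes, one could instead obtain the result by specialising \cite[Theorem 3.8]{adm} with $q_i = \frac{N}{n}$, but the direct route sketched above is shorter and keeps the proof self-contained within the section.
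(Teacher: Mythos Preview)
Your proposal is correct and follows essentially the approach the paper itself indicates: the paper does not spell out a proof but says the result follows ``along similar lines of proof of \cite[Theorem 3.8]{adm}'' using the existence result Theorem~\ref{lem4point2}, and your argument does precisely this by combining Theorem~\ref{lem4point2} for the upper bound with the real/imaginary split already deployed in Corollary~\ref{thm5point4} for the characterization. Your write-up is in fact slightly cleaner than the parallel passage in Corollary~\ref{thm5point4}, since you obtain $a_i=\frac{n}{N}$ directly from the squeeze $a_i\le \sqrt{a_i^2+b_i^2}\le \frac{n}{N}$ together with $\sum_i a_i=n$, rather than via contradiction.
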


Let $(F,G)$ be a dual pair. Then, by \cite[Proposition 1]{dev}, we have $$r_{F,G}^{(2)} = \max\limits_{i \neq j}\frac{1}{2} \left| \alpha_{ii} +\alpha_{jj} \pm \sqrt{(\alpha_{ii} - \alpha_{jj} )^2 + 4\alpha_{ij}\alpha_{ji} }     \right|,$$ where $\alpha_{ij} = \langle f_i , g_j \rangle,  1 \leq i,j \leq N $ and for  a complex number $\alpha+i\beta$ with $\alpha,\beta\in {\mathbb{R}},$ $\sqrt{\alpha+i\beta}=a+ib$, $a\ge 0$. If $(F,G)\in \mathcal{R}^{(1)},$ then
 \begin{align}\label{eqn4point10}
 	r_{F,G}^{(2)} = \max\limits_{i \neq j} \left|\frac{n}{N} + \sqrt{\alpha_{ij}\alpha_{ji} }     \right|,
 \end{align}
  as $\alpha_{ii} = \frac{n}{N},\forall i.$ This expression plays a  very crucial role in our analysis to characterize $2-$erasure spectrally optimal dual pairs, which is discussed below.
   This theorem looks similar to \cite[Theorem 3.11]{adm} with  $q_i=\frac{N}{n},\forall i$. However, that proof works only for real Hilbert spaces. The result discussed below takes care of even complex Hilbert spaces.
\begin {thm}\label{Thm4point4} Suppose $N>n$. Then, the following hold:
\begin{enumerate}
\item [{\em (i)}] $ r^{(2)} \geq \frac{n}{N} + \sqrt{\frac{n(N - n)}{N^2(N-1)}}.$
\item [{\em (ii)}]Suppose that there exists  a $2-$uniform dual pair for ${\cal H}.$ Then,
\begin{enumerate}
\item $ r^{(2)} = \frac{n}{N} + \sqrt{\frac{n(N - n)}{N^2(N-1)}},$ and
\item a dual pair  $(F,G)$ is 2-erasure optimal if and only if  $(F,G)$ is $2-$uniform.
\end{enumerate}
\end{enumerate}
\end{thm}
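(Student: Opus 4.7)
The plan is to exploit formula \eqref{eqn4point10}, which is available because every element of $\mathcal{R}^{(1)}$ is $1$-uniform by Theorem \ref{thm4point3}. Writing $\alpha_{ij}=\langle f_i,g_j\rangle$, the starting point is the identity $\sum_{i=1}^N \alpha_{ij}\alpha_{ji} = \alpha_{jj}$ for every $j$, which follows from the dual-frame reconstruction $\sum_i\langle f_j,g_i\rangle f_i=f_j$ and linearity of the inner product in the first slot. For a $1$-uniform pair this specializes to $\sum_{i\neq j}\alpha_{ij}\alpha_{ji}=\frac{n(N-n)}{N^2}$ for each fixed $j$, and summing over $j$ yields the global identity $\sum_{i\neq j,\,1\le i,j\le N}\alpha_{ij}\alpha_{ji}=\frac{n(N-n)}{N}$, a positive real.

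For part (i), taking real parts of this identity and averaging over the $N(N-1)$ ordered off-diagonal pairs produces some $(i_0,j_0)$ with $\mathrm{Re}(\alpha_{i_0j_0}\alpha_{j_0i_0}) \geq \frac{n(N-n)}{N^2(N-1)}$. The analytic core of the argument is the complex inequality
\[
\left|\tfrac{n}{N}+\sqrt{z}\right| \;\geq\; \tfrac{n}{N}+\sqrt{\mathrm{Re}(z)}, \qquad z\in\mathbb{C},\ \mathrm{Re}(z)\geq 0,
\]
where $\sqrt{z}=a+ib$ with $a\geq 0$ as fixed in the paper; squaring reduces this to $\tfrac{2n}{N}\bigl(a-\sqrt{a^2-b^2}\bigr)+2b^2\geq 0$, which is immediate, with equality precisely when $b=0$, that is, when $z\in[0,\infty)$. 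Applying the estimate at $(i_0,j_0)$ yields $r_{F,G}^{(2)}\geq \frac{n}{N}+\sqrt{\frac{n(N-n)}{N^2(N-1)}}$ for every $(F,G)\in\mathcal{R}^{(1)}$, hence the lower bound $r^{(2)}\geq \frac{n}{N}+\sqrt{\frac{n(N-n)}{N^2(N-1)}}$.

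For part (ii), if $(F,G)$ is a $2$-uniform dual pair, then the sum identity forces the constant $c''=\alpha_{ij}\alpha_{ji}$ to equal $\frac{n(N-n)}{N^2(N-1)}\in[0,\infty)$; formula \eqref{eqn4point10} then gives $r_{F,G}^{(2)}=\frac{n}{N}+\sqrt{c''}$, matching the lower bound and yielding both (a) and the ``if'' direction of (b). For the ``only if'' direction, given $(F,G)\in\mathcal{R}^{(2)}$, the bound $\bigl|\tfrac{n}{N}+\sqrt{\alpha_{ij}\alpha_{ji}}\bigr|\leq \frac{n}{N}+\sqrt{\frac{n(N-n)}{N^2(N-1)}}$ for every $i\neq j$, combined with the analytic estimate above, forces $\mathrm{Re}(\alpha_{ij}\alpha_{ji})\leq \frac{n(N-n)}{N^2(N-1)}$ termwise; comparison with the sum identity $\sum_{i\neq j}\mathrm{Re}(\alpha_{ij}\alpha_{ji})=\frac{n(N-n)}{N}$ over $N(N-1)$ terms upgrades these inequalities to equalities, and the equality clause of the analytic estimate then forces each $\alpha_{ij}\alpha_{ji}$ to be the real constant $\frac{n(N-n)}{N^2(N-1)}$, so $(F,G)$ is $2$-uniform.

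The main obstacle, and what separates this argument from the real-scalar proof in \cite{adm}, is precisely the lower bound on $\bigl|\tfrac{n}{N}+\sqrt{z}\bigr|$ in terms of $\mathrm{Re}(z)$ together with its sharp equality clause: without it, one cannot pass from the complex averaged identity to a pointwise real-valued condition on each product $\alpha_{ij}\alpha_{ji}$, and the characterization of $\mathcal{R}^{(2)}$ as the set of $2$-uniform dual pairs would break down over $\mathbb{C}$.
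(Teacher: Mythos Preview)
Your argument is correct and follows essentially the same route as the paper: both proofs rest on the averaged identity $\sum_{i\neq j}\alpha_{ij}\alpha_{ji}=\frac{n(N-n)}{N}$ (the paper's \eqref{eqn4point12}) together with the key complex estimate $\bigl|\tfrac{n}{N}+\sqrt{z}\bigr|\ge \tfrac{n}{N}+\sqrt{\mathrm{Re}(z)}$ when $\mathrm{Re}(z)\ge 0$. The paper derives this estimate in polar form via $\cos(\theta/2)\ge\sqrt{\cos\theta}$, while you derive it in Cartesian form by squaring; the content is the same.

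The one packaging difference worth noting is in the converse of (ii)(b). The paper treats the real and imaginary parts of $\alpha_{ij}\alpha_{ji}$ separately, first pinning down $\mathrm{Re}(\alpha_{ij}\alpha_{ji})$ to the constant and then arguing by contradiction (using the strict inequality $\cos(\theta/2)>\sqrt{\cos\theta}$ for $0<\theta<\pi/2$) that no imaginary part can survive. You instead isolate the equality clause $b=0$ in your analytic lemma once and for all, and then observe that once $\mathrm{Re}(\alpha_{ij}\alpha_{ji})=c$ is established, the chain $r^{(2)}\ge\bigl|\tfrac{n}{N}+\sqrt{\alpha_{ij}\alpha_{ji}}\bigr|\ge \tfrac{n}{N}+\sqrt{c}=r^{(2)}$ forces equality in the lemma and hence $\alpha_{ij}\alpha_{ji}\in[0,\infty)$. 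This is a bit more streamlined than the paper's case analysis, though the underlying idea is identical. One small point you leave implicit: when deducing $\mathrm{Re}(\alpha_{ij}\alpha_{ji})\le c$ from the upper bound and the lemma, the lemma only applies when $\mathrm{Re}(\alpha_{ij}\alpha_{ji})\ge 0$; the case $\mathrm{Re}(\alpha_{ij}\alpha_{ji})<0$ is trivial but should be mentioned.
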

\begin{proof}
Let $(F,G)\in \mathcal{R}^{(1)}.$  Then,
\begin{align}
r_{F,G}^{(2)} =  \max\limits_{i \neq j} \bigg| \frac{n}{N} +\sqrt{\langle f_i , g_j \rangle \langle f_j , g_i \rangle} \bigg|
 \ge \bigg| \frac{n}{N} +\sqrt{\langle f_{i_0} , g_{j_0} \rangle \langle f_{j_0} , g_{i_0} \rangle} \bigg|,\label{E:new4point2}
\end{align}
where $i_0\neq j_0$ is a pair of indices for which $\max\limits_{i \neq j}Re\left(\langle f_i , g_j \rangle \langle f_j , g_i \rangle\right)= Re\left(\langle f_{i_0} , g_{j_0} \rangle \langle f_{j_0} , g_{i_0} \rangle\right)$. We observe that  $Re\left(\langle f_{i_0} , g_{j_0} \rangle \langle f_{j_0} , g_{i_0} \rangle\right)$ $>0.$
In fact, using Theorem \ref{thm4point3}, we have
\begin{align}\label{eqn4point12}
 \sum\limits_{\substack{i, j =1\\i\neq j}}^N  \langle f_i , g_j \rangle \langle f_j , g_i \rangle&=
\sum\limits_{i, j =1}^N  \langle f_i , g_j \rangle \langle f_j , g_i \rangle -\sum\limits_{i =1}^N
\langle f_i, g_i\rangle^2
=\frac{n(N-n)}{N}>0.
\end{align}
 So, $\sum\limits_{\substack{i, j =1\\i\neq j}}^N  Re\left(\langle f_i , g_j \rangle \langle f_j , g_i \rangle\right) >0$, which implies that $\max\limits_{i \neq j}Re\left(\langle f_i , g_j \rangle \langle f_j , g_i \rangle\right)$ cannot be negative or zero. Taking $\langle f_{i_0} , g_{j_0} \rangle \langle f_{j_0} , g_{i_0} \rangle =c_0e^{i\theta_0},$ where $c_0\ge 0$ and  $-\pi< \theta_0\le\pi,$ we obtain $c_0\cos \theta_0>0$. Therefore, we actually have $c_0>0$ and $-\frac{\pi}{2}< \theta_0 < \frac{\pi}{2}$, and hence $\cos\left(\frac{\theta_0}{2}\right)\ge\sqrt{\cos(\theta_0)}$. Now, from \eqref{E:new4point2}, we get
\begin{align}\label{E:eqn4point4}
r_{F,G}^{(2)}
  &\ge  \frac{n}{N} + Re\sqrt{\langle f_{i_0} , g_{j_0} \rangle \langle f_{j_0} , g_{i_0} \rangle}=\frac{n}{N}+\sqrt{c_0}\cos\left(\frac{\theta_0}{2}\right)
  \ge \frac{n}{N} + \sqrt{Re\left(\langle f_{i_0} , g_{j_0} \rangle \langle f_{j_0} , g_{i_0}\rangle\right)}.
\end{align}
Next, as  $Re\left(\langle f_i , g_j \rangle \langle f_j , g_i \rangle\right) \le  Re\left(\langle f_{i_0} , g_{j_0} \rangle \langle f_{j_0} , g_{i_0}\rangle\right)$ for all $ i\neq j$, we obtain
 $\sum\limits_{\substack{i, j =1\\i\neq j}}^N Re\left(\langle f_i , g_j \rangle \langle f_j , g_i \rangle\right)$ $\le  N(N-1)Re\left(\langle f_{i_0} , g_{j_0} \rangle \langle f_{j_0} , g_{i_0}\rangle\right)$ and so,
 $\frac{n(N-n)}{N^2(N-1)}\le Re\left(\langle f_{i_0} , g_{j_0} \rangle \langle f_{j_0} , g_{i_0}\rangle\right)$.
This proves that   $ r^{(2)}\geq \frac{n}{N} + \sqrt{\frac{n(N - n)}{N^2(N-1)}}.$
\par
 Suppose  $(F', G')$ is a $2-$uniform dual pair for ${\cal H}.$ Then,  $(F', G')$ is $1-$uniform and hence by  Theorem \ref{thm4point3},  it is 1-erasure optimal. Now,
using  \eqref{eqn4point12} and the $2-$uniform condition,  we obtain $\langle f'_i,g'_j \rangle \langle f'_j,g'_i \rangle = \frac{n(N - n)}{N^2(N-1)}$ for all $i \neq j$. Therefore, in view of \eqref{eqn4point10} and the first part of the theorem, we get $r^{(2)}=r_{F',G'}^{(2)} = \frac{n}{N} + \sqrt{\frac{n(N - n)}{N^2(N-1)}}.$ Also, every other $2-$uniform dual pair belongs to ${\mathcal R}^{(2)}.$

Conversely, suppose   $(F,G)\in {\mathcal R}^{(2)}.$   Then, $(F,G)\in {\mathcal R}^{(1)}$ and hence, by Theorem \ref{thm4point3}, it is $1-$uniform.
In order to show the $2-$uniform condition, we first recall from the earlier part of the proof that $\max\limits_{i \neq j}Re\left(\langle f_i , g_j \rangle \langle f_j , g_i \rangle\right)\ge \frac{n(N-n)}{N^2(N-1)}$ and observe from  \eqref{eqn4point12} that $0=\sum\limits_{\substack{i, j =1\\i\neq j}}^N Im\left( \langle f_i , g_j \rangle \langle f_j , g_i \rangle\right)$ $\le N(N-1)\max\limits_{i \neq j}Im\left(\langle f_i , g_j \rangle \langle f_j , g_i \rangle\right)$, which in turn implies that
$\max\limits_{i \neq j}Im\left(\langle f_i , g_j \rangle \langle f_j , g_i \rangle\right)\ge 0.$
Now, suppose $\max\limits_{i \neq j} Re\left(\langle f_i , g_j \rangle \langle f_j , g_i \rangle\right)> \frac{n(N - n)}{N^2(N-1)}.$ Then, by \eqref{E:eqn4point4}, we have
\begin{align*}	
r_{F,G}^{(2)} \geq \frac{n}{N} + \sqrt{\max\limits_{i \neq j} Re\left(\langle f_i , g_j \rangle \langle f_j , g_i \rangle\right)}
 > \frac{n}{N} + \sqrt{\frac{n(N - n)}{N^2(N-1)}}=r^{(2)},
\end{align*}
which is not possible. So then, we infer that  $\max\limits_{i \neq j} Re\left(\langle f_i , g_j \rangle \langle f_j , g_i \rangle\right) = \frac{n(N - n)}{N^2(N-1)}.$ Moreover, we would also obtain
$Re\left(\langle f_i , g_j \rangle \langle f_j , g_i \rangle\right) = \frac{n(N - n)}{N^2(N-1)}$ for all $i\neq j$, for if
$Re\left(\langle f_i , g_j \rangle \langle f_j , g_i \rangle\right) < \frac{n(N - n)}{N^2(N-1)}$ for some $i\neq j$, then it would contradict \eqref{eqn4point12}. Next, suppose we assume $0<\max\limits_{i \neq j} Im\left(\langle f_i , g_j \rangle \langle f_j , g_i \rangle\right)=Im\left(\langle f_{i_1} , g_{j_1} \rangle \langle f_{j_1} , g_{i_1} \rangle\right)$, for some $i_1\neq j_1$ and take $\langle f_{i_1} , g_{j_1} \rangle \langle f_{j_1} , g_{i_1} \rangle=c_1e^{i\theta_1},$ with $c_1\ge 0$ and $-\pi <  \theta_1 \le \pi$. Then,  we deduce that $c_1\cos{\theta_1}=\frac{n(N - n)}{N^2(N-1)}>0$ and
$c_1\sin{\theta_1}>0,$ which leads to $c_1>0$ and $0<  \theta_1 < \frac{\pi}{2}.$  By a similar argument as above and using the fact that $\cos(\frac{\theta_1}{2})>\sqrt{\cos(\theta_1)}$, we get the contradiction  $r_{F,G}^{(2)}>r^{(2)}.$ Therefore, $\max\limits_{i \neq j} Im\left(\langle f_i , g_j \rangle \langle f_j , g_i \rangle\right)=0$, which in turn gives $Im\left(\langle f_i , g_j \rangle \langle f_j , g_i \rangle\right) =0$ for all $i\neq j$, following the lines of proof of  $Re\left(\langle f_i , g_j \rangle \langle f_j , g_i \rangle\right)=\frac{n(N - n)}{N^2(N-1)}$ for all $i\neq j$. We may then conclude that  $\langle f_i , g_j \rangle \langle f_j , g_i \rangle=  \frac{n(N - n)}{N^2(N-1)},$ $\forall i\neq j$, thereby proving the theorem.
\end{proof}
From the proof of the above theorem, we also have the following
\begin{rem}
 For $(F,G)\in \mathcal{R}^{(1)},$ $r_{F,G}^{(2)} = \frac{n}{N} + \sqrt{\frac{n(N - n)}{N^2(N-1)}}$ if and only if $(F,G)$ is $2-$uniform.
\end{rem}

\begin{rem}
When $N=n$, every dual pair $(F,G)$ belongs to $\mathcal{R}^{(2)}.$ This can be proved as is done in \cite{adm},
by taking $q_i=1$ for all $i$.
\end{rem}

The following is a characterization, in terms of  equiangular frames, of an optimal dual pair consisting of a frame and its canonical dual, when there are two erasures.
\begin{cor}
Let $F = \{f_i\}_{i=1}^{N} $ be a frame for $\mathcal{H}$. Then the following hold:
\begin{enumerate}
\item [{\em (i)}]
If   $\left\{S_{F}^{-\frac{1}{2}} f_i \right\}_{i=1}^N $ is an equiangular frame, then $(F,S_{F}^{-1}F)$ is a $2-$erasure spectrally optimal dual pair.
\item [{\em (ii)}] If $(F,S_{F}^{-1}F)$ is a $2-$erasure spectrally optimal dual pair and there exists a $2-$uniform dual pair for ${\cal H }$, then $\left\{S_{F}^{-\frac{1}{2}} f_i \right\}_{i=1}^N $ is an equiangular frame for ${\cal H}$.
    \end{enumerate}
\end{cor}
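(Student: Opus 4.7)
The plan is to reduce both directions to Theorem \ref{Thm4point4} by showing that the equiangularity of $\{S_F^{-1/2}f_i\}_{i=1}^N$ is equivalent to the $2$-uniformity of the dual pair $(F, S_F^{-1}F)$. The bridge is the self-adjointness of $S_F^{-1/2}$, which gives the key identity
\[
\langle S_F^{-1/2}f_i, S_F^{-1/2}f_j\rangle = \langle f_i, S_F^{-1}f_j\rangle, \qquad 1\le i,j\le N.
\]
In particular, $\|S_F^{-1/2}f_i\|^2 = \langle f_i, S_F^{-1}f_i\rangle$ and $|\langle S_F^{-1/2}f_i, S_F^{-1/2}f_j\rangle|^2 = \langle f_i, S_F^{-1}f_j\rangle\langle f_j, S_F^{-1}f_i\rangle$ for $i\neq j$. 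Also, $\{S_F^{-1/2}f_i\}_{i=1}^N$ is a Parseval frame, so it is self-dual and \eqref{eqn2point1} applied to this dual pair gives $\sum_{i=1}^N \|S_F^{-1/2}f_i\|^2 = n$.

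For (i), assume $\{S_F^{-1/2}f_i\}_{i=1}^N$ is equiangular with common norm $\alpha$ and $|\langle S_F^{-1/2}f_i, S_F^{-1/2}f_j\rangle| = c$ for $i\neq j$. The Parseval relation above forces $N\alpha^2 = n$, whence $\alpha^2 = n/N$. Combined with the key identity, this gives $\langle f_i, S_F^{-1}f_i\rangle = n/N$ for every $i$, so $(F, S_F^{-1}F)$ is $1$-uniform, and also $\langle f_i, S_F^{-1}f_j\rangle\langle f_j, S_F^{-1}f_i\rangle = c^2$ for every $i\neq j$, so $(F, S_F^{-1}F)$ is $2$-uniform. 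In particular a $2$-uniform dual pair for $\mathcal{H}$ exists, and Theorem \ref{Thm4point4}(ii)(b) yields that $(F, S_F^{-1}F)$ is $2$-erasure spectrally optimal.

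For (ii), the hypothesis that a $2$-uniform dual pair for $\mathcal{H}$ exists activates Theorem \ref{Thm4point4}(ii)(b), which forces every $2$-erasure spectrally optimal dual pair, in particular $(F, S_F^{-1}F)$, to be $2$-uniform. Translating the defining conditions back through the key identity, the $1$-uniform condition gives $\|S_F^{-1/2}f_i\|^2 = n/N$ for all $i$ (equal norm), and the $2$-uniform condition gives $|\langle S_F^{-1/2}f_i, S_F^{-1/2}f_j\rangle|^2 = c''$ for all $i\neq j$ (constant modulus of cross inner products). Hence $\{S_F^{-1/2}f_i\}_{i=1}^N$ is an equiangular frame for $\mathcal{H}$.

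There is no genuine obstacle in this argument: once Theorem \ref{Thm4point4} is in hand, the proof is a purely algebraic translation via $S_F^{-1/2}$. The only point requiring a little care is recognising that the equiangular assumption automatically pins down the common norm to $\sqrt{n/N}$ through the Parseval identity, which is what matches the $1$-uniform constant $n/N$ in the definition of a $2$-uniform dual pair.
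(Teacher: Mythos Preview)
Your proposal is correct and follows exactly the approach the paper indicates: the paper's entire proof is the single remark that the corollary ``follows easily as an application of Theorem \ref{Thm4point4}, by using the relation $\langle f_i,S_{F}^{-1} f_i \rangle = \|S_{F}^{-\frac{1}{2}} f_i\|^2$,'' and you have simply written out the details of that translation between equiangularity of $\{S_F^{-1/2}f_i\}$ and $2$-uniformity of $(F,S_F^{-1}F)$.
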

\noindent
The above result follows easily as an application of Theorem \ref{Thm4point4}, by using the relation $\langle f_i,S_{F}^{-1} f_i \rangle = \left\|  S_{F}^{-\frac{1}{2}} f_i  \right\|^2. $
Next, we show that for every $n\in\mathbb{N}$, we can easily construct an $(n+1,n)$ dual pair which is $2-$uniform, and hence $2-$erasure spectrally optimal.

\begin{thm}
There always exists a $2-$erasure spectrally optimal $(n+1,n)$ dual pair for any Hilbert space of dimension $n$.
\end{thm}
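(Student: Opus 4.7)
The strategy is to invoke Theorem~\ref{Thm4point4}(ii): every $2$-uniform dual pair in $\mathcal H$ is automatically $2$-erasure spectrally optimal, so it suffices to exhibit a single $2$-uniform $(n+1,n)$ dual pair. The natural candidate is the \emph{regular simplex} equiangular Parseval frame paired with itself (its own canonical dual).

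My construction would proceed as follows. Take an $(n+1)$-dimensional Hilbert space $\mathcal K$ with orthonormal basis $\{e_1,\dots,e_{n+1}\}$, let $P$ be the orthogonal projection of $\mathcal K$ onto the hyperplane $\mathcal H_0=\{x\in\mathcal K:\sum_{j=1}^{n+1}\langle x,e_j\rangle=0\}$, and set $f_i:=Pe_i$ for $1\le i\le n+1$. Since $\sum_{i=1}^{n+1}Pe_i(Pe_i)^{*}=P$, the sequence $F=\{f_i\}_{i=1}^{n+1}$ is a Parseval frame for the $n$-dimensional space $\mathcal H_0$, which is isometrically isomorphic to $\mathcal H$. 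Take $G:=F$, which is then the canonical dual of $F$.

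The verification of $2$-uniformity is a direct computation from $P=I-\tfrac{1}{n+1}E$, where $E$ is the rank-one operator $x\mapsto\sum_{j}\langle x,e_j\rangle\sum_{k}e_k$. One obtains $\|f_i\|^{2}=\tfrac{n}{n+1}$ and $\langle f_i,f_j\rangle=-\tfrac{1}{n+1}$ for $i\ne j$, whence $\langle f_i,g_i\rangle=\tfrac{n}{n+1}=\tfrac{n}{N}$, so $(F,G)$ is $1$-uniform, and
\[
\langle f_i,g_j\rangle\langle f_j,g_i\rangle=|\langle f_i,f_j\rangle|^{2}=\tfrac{1}{(n+1)^{2}},\qquad i\ne j.
\]
With $N=n+1$, the value $\tfrac{n(N-n)}{N^{2}(N-1)}$ appearing in Theorem~\ref{Thm4point4} equals precisely $\tfrac{1}{(n+1)^{2}}$, so $(F,G)$ is $2$-uniform, and the theorem gives the desired optimality.

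There is no real obstacle: once the regular simplex is identified as the natural candidate, everything reduces to two elementary inner product calculations involving $P$. The only point to check carefully is that the constant $\tfrac{1}{(n+1)^{2}}$ produced by the construction matches the threshold $\tfrac{n(N-n)}{N^{2}(N-1)}$ dictated by $2$-uniformity, which it does automatically for $N=n+1$.
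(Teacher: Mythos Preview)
Your argument is correct: exhibiting a single $2$-uniform $(n+1,n)$ dual pair and invoking Theorem~\ref{Thm4point4}(ii) is exactly the right strategy, and the simplex computation is clean and accurate. The paper follows the same overall plan but uses a different explicit witness: rather than the equiangular simplex Parseval frame, it takes $N=n+1$ in the construction of Theorem~\ref{lem4point2} (the frame $F=\{e_1,\dots,e_n,\sum_j e_j\}$ together with the non-canonical dual $G$ built there) and checks by direct computation that $\langle f_i,g_j\rangle\langle f_j,g_i\rangle=\tfrac{1}{(n+1)^2}$ for all $i\neq j$. Your choice has the virtue of being a classical, highly symmetric object for which the inner products are immediate; the paper's choice has the complementary virtue of recycling a construction already in hand and of showing that $2$-erasure spectral optimality is attained by a pair $(F,G)$ in which $F$ is not Parseval and $G$ is not the canonical dual. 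One minor remark: to establish $2$-uniformity you only need $\langle f_i,g_j\rangle\langle f_j,g_i\rangle$ to be constant for $i\neq j$; the agreement of that constant with $\tfrac{n(N-n)}{N^2(N-1)}$ is then automatic (by \eqref{eqn4point12}) and serves only as a consistency check.
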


\begin{proof}
Consider the dual pair $(F,G)$ constructed as in Theorem \ref{lem4point2}, by taking $N=n+1$. It has been observed that it is $1-$uniform. Further, it also satisfies the $2-$uniform condition for a dual pair, which is an easy computation. The result now follows from Theorem \ref{Thm4point4}.
\end{proof}

\begin{thm}
	Let $(F,G) $ be an (N,n) dual pair for $\mathcal{H}$ and $T$ be an invertible operator on $\mathcal{H}.$ Then, $(F,G) \in \mathcal{R}^{(m)}$ if and only if $(TF, (T^*)^{-1}G) \in  \mathcal{R}^{(m)},$ for $m=1,2 .$
\end{thm}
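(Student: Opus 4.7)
The plan is to exploit the fact that the transformation $(F,G)\mapsto (TF,(T^*)^{-1}G)$ is an involutive-type bijection on the set of all $(N,n)$ dual pairs for $\mathcal{H}$ and that it leaves every mixed inner product $\langle f_i,g_j\rangle$ invariant. This immediately forces the spectral-radius quantities $r_{F,G}^{(1)}$ and $r_{F,G}^{(2)}$ to agree with $r_{TF,(T^*)^{-1}G}^{(1)}$ and $r_{TF,(T^*)^{-1}G}^{(2)}$ respectively, and then the membership equivalence follows from the definition of $\mathcal{R}^{(m)}$ as a minimizer.

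First I would verify that $(TF,(T^*)^{-1}G)$ is indeed a dual pair. For any $f\in\mathcal{H}$,
\[
\sum_{i=1}^{N}\langle f,(T^*)^{-1}g_i\rangle\,Tf_i = T\!\left(\sum_{i=1}^{N}\langle T^{-1}f,g_i\rangle f_i\right)=T(T^{-1}f)=f,
\]
so $(T^*)^{-1}G$ is a dual of $TF$. Next, I would record the crucial identity
\[
\langle Tf_i,(T^*)^{-1}g_j\rangle = \langle f_i,T^*(T^*)^{-1}g_j\rangle=\langle f_i,g_j\rangle,\qquad 1\le i,j\le N,
\]
which shows that the entire matrix $(\alpha_{ij})=(\langle f_i,g_j\rangle)$ used to express $r^{(1)}$ and $r^{(2)}$ is preserved.

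Using $r_{F,G}^{(1)}=\max_{1\le i\le N}|\langle f_i,g_i\rangle|$ and the formula recalled before Theorem \ref{Thm4point4}, namely
\[
r_{F,G}^{(2)} = \max\limits_{i\neq j}\tfrac{1}{2}\left|\alpha_{ii}+\alpha_{jj}\pm\sqrt{(\alpha_{ii}-\alpha_{jj})^{2}+4\alpha_{ij}\alpha_{ji}}\,\right|,
\]
the preservation of each $\alpha_{ij}$ immediately gives $r_{F,G}^{(m)}=r_{TF,(T^*)^{-1}G}^{(m)}$ for $m=1,2$. Since the assignment $(F,G)\mapsto(TF,(T^*)^{-1}G)$ has inverse $(F,G)\mapsto (T^{-1}F,T^*G)$ and hence is a bijection on the collection of all $(N,n)$ dual pairs for $\mathcal{H}$, the infima defining $r^{(1)}$ and $r^{(2)}$ are attained on the same orbit. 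Consequently $(F,G)\in\mathcal{R}^{(1)}$ iff $(TF,(T^*)^{-1}G)\in\mathcal{R}^{(1)}$, and then repeating the argument restricted to $\mathcal{R}^{(1)}$ (which is itself preserved by the bijection, by the previous equivalence) yields the same equivalence for $\mathcal{R}^{(2)}$.

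No serious obstacle is anticipated here; the only point deserving care is to observe that the bijection maps $\mathcal{R}^{(1)}$ onto $\mathcal{R}^{(1)}$ before one can legitimately invoke the definition of $\mathcal{R}^{(2)}$ (as an infimum restricted to $\mathcal{R}^{(1)}$) to conclude the $m=2$ case.
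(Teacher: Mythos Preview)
Your proposal is correct and follows essentially the same route as the paper: verify that $(TF,(T^*)^{-1}G)$ is again a dual pair, observe that $\langle Tf_i,(T^*)^{-1}g_j\rangle=\langle f_i,g_j\rangle$ for all $i,j$, and conclude that the spectral-radius quantities $r^{(1)}_{F,G}$ and $r^{(2)}_{F,G}$ are unchanged. The only cosmetic differences are that the paper uses the simplified formula \eqref{eqn4point10} for $r^{(2)}_{F,G}$ valid on $\mathcal{R}^{(1)}$ rather than the general two-erasure formula you invoke, and it omits the bijection argument (since $r^{(1)}$ and $r^{(2)}$ are fixed constants independent of any orbit); neither difference affects the substance of the proof.
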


\begin{proof}
	It is easy to verify that 	$G=\{g_i\}_{i=1}^N$ is a dual frame for the frame $F=\{f_i\}_{i=1}^N$ if and only if $(T^*)^{-1}G =  \{(T^*)^{-1}g_i\}_{i=1}^N$ is a dual frame for the frame $TF = \{Tf_i\}_{i=1}^N.$ Now,
		$r_{F,G}^{(1)} = \max\limits_{1 \leq i \leq N} \left| \langle f_i, g_i \rangle \right| = \max\limits_{1 \leq i \leq N} \left| \langle Tf_i, (T^*)^{-1}g_i \rangle \right| = r_{TF, (T^*)^{-1}G}^{(1)}$
	and hence, $(F,G) \in \mathcal{R}^{(1)}$ if and only if $\left(TF, (T^*)^{-1}G\right) \in  \mathcal{R}^{(1)}.$
	Also, if $(F,G) \in \mathcal{R}^{(1)}$,
				$$r_{F,G}^{(2)} = \max\limits_{i \neq j} \left|\frac{n}{N} + \sqrt{\langle f_i, g_j \rangle \langle f_j, g_i \rangle  }     \right| = \max\limits_{i \neq j} \left|\frac{n}{N} + \sqrt{\langle Tf_i, (T^*)^{-1}g_j \rangle \langle Tf_j, (T^*)^{-1}g_i \rangle }     \right| = r_{TF, (T^*)^{-1}G}^{(2)}.$$
Thus, $(F,G) \in \mathcal{R}^{(2)}$ if and only if $\left(TF, (T^*)^{-1}G\right) \in  \mathcal{R}^{(2)}.$
	\end{proof}

\section{Optimality analysis using numerical radius}

In this section, we analyze the optimality of dual pairs by taking the numerical radius of the error operator as the measure. Let $(F,G)$ be an $(N, n)$ dual pair for  $\mathcal{H}$. For  $1 \leq m \leq N,$ we define $\eta_{F,G}^{(m)} = \max \left\{ \omega\left( E_{\Lambda,F,G} \right)  : \Lambda\in\mathcal{A}_m \right\},$
where $ \omega\left( E_{\Lambda,F,G} \right) := \sup \left\{\left| \langle  E_{\Lambda,F,G} f, f \rangle  \right| : \|f\| = 1\right\}$ is the numerical radius of the error operator $E_{\Lambda,F,G}.$ Also,  we define
\begin{align*}
	&\eta^{(1)} := \inf\bigg\{ \eta_{F,G}^{(1)} : (F,G)\, \text{is a dual pair in $\mathcal{H}$} \bigg\}, \\
& \mathcal{N}^{(1)} := \left\{ (F,G) : \eta_{F,G}^{(1)} = \eta^{(1)} \right\}, \\
& \eta^{(m)} := \inf \left\{ \eta_{F,G}^{(m)} : (F,G) \in \mathcal{N}^{(m-1)} \right\} \text{ for }m>1 \text{ and } \\
& \mathcal{N}^{(m)} := \left\{ (F,G) \in \mathcal{N}^{(m-1)}: \eta_{F,G}^{(m)} = \eta^{(m)} \right\} \text{ for } m>1.
\end{align*}
The members of ${\mathcal{N}}^{(m)}$ are called $m-$erasure numerically  optimal dual pairs. Here, we analyze optimal dual pairs when there is a single erasure, in which case  $\omega\left( E_{\Lambda,F,G} \right)$ can be expressed \cite[Lemma 2.1]{chien} in terms of the trace and operator norm of the error operator. More precisely, $\omega\left( E_{\Lambda,F,G} \right) = \frac{|tr\left( E_{\Lambda,F,G} \right)| + \left\| E_{\Lambda,F,G} \right\|}{2}$. The following theorem provides the value of $\eta^{(1)}$ and some necessary and sufficient conditions for a dual pair to be numerically optimal.
\begin{thm}\label{thm5point1}
    The value of $ \eta^{(1)}$ is $\frac{n}{N}.$ Further, for an $(N, n)$ dual pair $(F,G)$,
\begin{enumerate}
\item [{\em (i)}] $(F,G)\in \mathcal{N}^{(1)}$ if and only if $\|f_i\|\,\|g_i\|= \frac{n}{N},\,\forall \, 1\leq i \leq N$.
\item [{\em (ii)}]If $(F,G)\in \mathcal{N}^{(1)}$, then $\langle f_i,g_i \rangle = \frac{n}{N},\,\forall \,  1\leq i \leq N .$ However, the converse need not hold.
\end{enumerate}
 \end{thm}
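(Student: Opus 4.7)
The plan is to exploit that for $m=1$ each error operator $E_{\{i\},F,G}\colon f\mapsto \langle f,f_i\rangle g_i$ has rank one, so $|\mathrm{tr}(E_{\{i\},F,G})|=|\langle f_i,g_i\rangle|$ and $\|E_{\{i\},F,G}\|=\|f_i\|\,\|g_i\|$. The cited identity \cite[Lemma 2.1]{chien} then yields
$$\omega(E_{\{i\},F,G})=\tfrac{1}{2}\bigl(|\langle f_i,g_i\rangle|+\|f_i\|\,\|g_i\|\bigr),$$
so $\eta_{F,G}^{(1)}=\max_{1\le i\le N}\tfrac{1}{2}\bigl(|\langle f_i,g_i\rangle|+\|f_i\|\,\|g_i\|\bigr)$. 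All subsequent arguments revolve around this single formula.

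To evaluate $\eta^{(1)}$, I would obtain the lower bound $n/N$ by averaging: from $\sum_i\langle f_i,g_i\rangle=n$ in \eqref{eqn2point1} and $|\langle f_i,g_i\rangle|\le\|f_i\|\,\|g_i\|$, both $\sum_i|\langle f_i,g_i\rangle|\ge n$ and $\sum_i\|f_i\|\,\|g_i\|\ge n$, so the maximum of the $N$ summands is $\ge n/N$. For a matching upper bound, take an equal norm Parseval frame $F$ (guaranteed by Remark~\ref{equal norm PF exist}) paired with $G=F$; every summand then equals $n/N$, and $\eta^{(1)}=n/N$ follows.

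For (i), if $(F,G)\in\mathcal{N}^{(1)}$ then each $\omega(E_{\{i\},F,G})\le n/N$, which forces both sums to equal exactly $n$. Because $\|f_i\|\,\|g_i\|-|\langle f_i,g_i\rangle|\ge 0$ and sums to zero, each term vanishes, pinning $\|f_i\|\,\|g_i\|=|\langle f_i,g_i\rangle|=n/N$ for every $i$. Conversely, if $\|f_i\|\,\|g_i\|=n/N$ throughout, Theorem~\ref{prop3point2} puts $(F,G)$ in $\mathcal{F}^{(1)}$ and Corollary~\ref{thm5point4} upgrades this to $\langle f_i,g_i\rangle=n/N$, making every $\omega(E_{\{i\},F,G})=n/N$ and $(F,G)\in\mathcal{N}^{(1)}$.

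The first half of (ii) is then immediate from (i) combined with Corollary~\ref{thm5point4}. For the converse failure I would exhibit a concrete example, for instance in $\mathbb{R}^2$ with $N=4$: take the equal norm Parseval frame $F=\{e_1,e_2,-e_1,-e_2\}/\sqrt{2}$ and set $g_1=f_1+e_2$, $g_3=f_3+e_2$, $g_2=f_2$, $g_4=f_4$. A direct expansion shows $\sum_i\langle f,f_i\rangle g_i=f$ for $f=(a,b)$, so $(F,G)$ is a genuine dual pair with $\langle f_i,g_i\rangle=1/2=n/N$ for all $i$, yet $\|f_1\|\,\|g_1\|=\sqrt{3}/2>1/2$, so by (i) it is not in $\mathcal{N}^{(1)}$. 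The only delicate step is constructing this counterexample cleanly; everything else is direct bookkeeping with the closed form for $\omega(E_{\{i\},F,G})$.
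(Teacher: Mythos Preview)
Your argument is correct and follows essentially the same route as the paper: the closed form $\omega(E_{\{i\},F,G})=\tfrac12(|\langle f_i,g_i\rangle|+\|f_i\|\,\|g_i\|)$, the averaging lower bound via \eqref{eqn2point1}, and the equal norm Parseval frame for the matching upper bound. The only notable differences are cosmetic: for the converse of (i) the paper argues directly from $|\langle f_i,g_i\rangle|\le\|f_i\|\,\|g_i\|=n/N$ without invoking Theorem~\ref{prop3point2} and Corollary~\ref{thm5point4}, and for the failure of the converse in (ii) the paper uses a $(3,2)$ example (Example~\ref{Eg:5.2}) rather than your $(4,2)$ construction, but your counterexample is valid and checks out.
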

     %if  then
   %  $$\langle f_i,g_i \rangle=\|f_i\|\,\|g_i\|= \frac{n}{N}, 1\leq i \leq N.$$
    %\item
   % Conversely, if $\|f_i\|\,\|g_i\|= \frac{n}{N},\,\forall \, 1\leq i \leq N,$ then $(F,G)\in \mathcal{N}^{(1)}$.

\begin{proof}
Let  $(F,G)$ be a dual pair. If the erasure occurs at the $i^{th}$ position, then we can easily compute that $\omega\left( E_{\Lambda,F,G} \right)$ $=\dfrac{|\langle g_i,f_i\rangle| + \|g_i\|\,\|f_i\|}{2}$. Further, $\eta_{F,G}^{(1)} =$ $\max\limits_{1 \leq i \leq N} \dfrac{|\langle g_i,f_i\rangle| + \|g_i\|\,\|f_i\|}{2}$ $\geq \max\limits_{1 \leq i \leq N} |\langle g_i,f_i\rangle|$. As $\left(\max\limits_{1 \leq i \leq N} |\langle g_i,f_i\rangle|\right)N\geq \sum\limits_{i=1}^N |\langle g_i,f_i\rangle| \geq \left|\sum\limits_{i=1}^N \langle g_i,f_i\rangle \right| =n,$ we have $\eta_{F,G}^{(1)}\geq \frac{n}{N}.$ By Remark \ref{equal norm PF exist}, there exists a dual pair $(F, F)$ such that $\eta_{F,F}^{(1)} = \max\limits_{1 \leq i \leq N} \|f_i\|^2 = \frac{n}{N}$, and hence we may conclude that $\eta^{(1)} =\frac{n}{N}.$
\par
Now, consider $(F,G)\in {\cal N}^{(1)}.$ Then, $\max\limits_{1 \leq i \leq N} \dfrac{|\langle g_i,f_i\rangle| + \|g_i\|\,\|f_i\|}{2} = \dfrac{n}{N}.$ If  $\dfrac{|\langle g_j,f_j\rangle| + \|g_j\|\,\|f_j\|}{2} < \dfrac{n}{N}$ for some $j,$  then $\sum\limits_{i=1}^N \dfrac{|\langle g_i,f_i\rangle| + \|g_i\|\,\|f_i\|}{2} <n$. On the other hand, $\sum\limits_{i=1}^N \dfrac{|\langle g_i,f_i\rangle| + \|g_i\|\,\|f_i\|}{2}\geq \sum\limits_{i=1}^N \left|\langle g_i,f_i \rangle \right| \geq \sum\limits_{i=1}^N \langle g_i,f_i \rangle =n$, which is not possible. So,
\begin{equation}\label{E:5point1}
\dfrac{|\langle g_i,f_i\rangle| + \|g_i\|\,\|f_i\|}{2} = \frac{n}{N},\; \text{for all}\; 1 \leq i \leq N.
\end{equation}
Next, $n\leq \sum\limits_{i=1}^N \left|\langle g_i,f_i \rangle \right| \leq \sum\limits_{i=1}^N \dfrac{|\langle g_i,f_i\rangle| + \|g_i\|\,\|f_i\|}{2} = n$ suggests that $\sum\limits_{i=1}^N \left|\langle g_i,f_i \rangle \right| = \sum\limits_{i=1}^N \dfrac{|\langle g_i,f_i\rangle| + \|g_i\|\,\|f_i\|}{2}.$ As $\left|\langle g_i,f_i \rangle \right| \leq \dfrac{|\langle g_i,f_i\rangle| + \|g_i\|\,\|f_i\|}{2},$ we actually have the equality, and so $\left|\langle g_i,f_i \rangle \right| =\|f_{i}\|\,\|g_{i}\|$, $\forall\,i$. Furthermore, using \eqref{E:5point1}, we obtain $\|f_{i}\|\,\|g_{i}\|=  \frac{n}{N} ,\;\forall 1\leq i \leq N.$
\par
Conversely, for an $(N, n)$ dual pair $(F,G)$, $\|f_{i}\|\,\|g_{i}\|= \frac{n}{N} ,\;\forall 1\leq i \leq N$ leads to $\dfrac{|\langle g_i,f_i\rangle| + \|g_i\|\,\|f_i\|}{2}$ $\leq \|f_{i}\|\,\|g_{i}\|=  \dfrac{n}{N}$, $\forall\, i$. This in turn gives $\eta_{F,G}^{(1)} \leq \frac{n}{N}$, and so $(F,G)\in {\cal N}^{(1)},$ thereby proving statement (i) of the theorem.
\par
In order to prove statement (ii), consider $(F,G)\in {\cal N}^{(1)}.$ By statement (i) of this theorem, $\|f_{i}\|\,\|g_{i}\|= \frac{n}{N} ,\;\forall 1\leq i \leq N$. The proof of the consequence $\langle f_i,g_i \rangle = \frac{n}{N},\,\forall \,  1\leq i \leq N$ follows from the proof of Corollary \ref{thm5point4}. We infer that the converse of this statement need not hold, from the example discussed below.
\end{proof}
The following is a counterexample for the converse part mentioned in statement (ii) of the above theorem.
\begin{example}\label{Eg:5.2}
Consider the frame $F= \{f_1,f_2,f_3\}$ for $\mathbb{C}^2,$ where $f_1 = \left[\begin{array}{l}
		1 \\0
	\end{array}\right] $,
	$f_2 = \left[\begin{array}{l}
		0 \\ 1
	\end{array}\right] $
	and $ f_3 = \left[\begin{array}{l}
		1 \\ 1
	\end{array}\right] $, and its canonical dual frame
$\displaystyle{S_{F}^{-1} F = \left\{ \left[\begin{array}{r} \frac{2}{3} \\~\\ -\frac{1}{3} \end{array}\right],  \left[\begin{array}{r} -\frac{1}{3} \\~\\ \frac{2}{3} \end{array}\right], \left[\begin{array}{l} \frac{1}{3} \\~\\ \frac{1}{3}  \end{array}\right] \;\;\right\}}. $ Then, $\langle f_i, S_{F}^{-1}f_i\rangle=\frac{2}{3}=\frac{n}{N}$, $\forall\, i$. However, $\eta_{F,S_{F}^{-1} F}^{(1)} =$ $\max\limits_{1 \leq i \leq N} \dfrac{|\langle S_{F}^{-1}f_i,f_i\rangle| + \|S_{F}^{-1} f_i\|\,\|f_i\|}{2}>\frac{2}{3}=\eta^{(1)},$ which implies that $(F,S_{F}^{-1} F)\notin {\cal N}^{(1)}.$
\end{example}

Given below is an interesting uniqueness type result - if a frame and its canonical dual is $1-$erasure numerically optimal, then no other dual of $F$ can be paired with it for optimality.

\begin{thm} For a frame $F$ in $\mathcal{H}$, if $(F, S_{F}^{-1}F) \in {\cal N}^{(1)},$ then
		$S_{F}^{-\frac{1}{2}}F$ is an equal norm Parseval frame. In addition, if $(F,G) \in \mathcal{N}^{(1)}$ for any dual $G$ of $F$, then $G = S_{F}^{-1}F. $
\end{thm}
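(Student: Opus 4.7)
The plan is as follows. For the first assertion, recall that $\{S_F^{-1/2}f_i\}_{i=1}^N$ is automatically a Parseval frame (its frame operator is $S_F^{-1/2}S_FS_F^{-1/2}=I$), so all I need is that it is equal-norm. I would compute
\[
\|S_F^{-1/2}f_i\|^2 = \langle S_F^{-1/2}f_i, S_F^{-1/2}f_i\rangle = \langle f_i, S_F^{-1}f_i\rangle,
\]
and then invoke Theorem \ref{thm5point1}(ii) applied to the pair $(F, S_F^{-1}F)\in {\cal N}^{(1)}$ to conclude $\langle f_i, S_F^{-1}f_i\rangle = \frac{n}{N}$ for every $i$. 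Hence $\|S_F^{-1/2}f_i\|^2 = \frac{n}{N}$ for all $i$, proving that $S_F^{-1/2}F$ is an equal norm Parseval frame.

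For the second assertion, the crucial ingredient is the equality case of the Cauchy--Schwarz inequality. Applying Theorem \ref{thm5point1}(i),(ii) simultaneously to the two optimal pairs $(F, S_F^{-1}F)$ and $(F, G)$, I would record
\[
\langle f_i, S_F^{-1}f_i\rangle = \|f_i\|\,\|S_F^{-1}f_i\| = \frac{n}{N}, \qquad \langle f_i, g_i\rangle = \|f_i\|\,\|g_i\| = \frac{n}{N},
\]
for every $1\le i\le N$. Since in each relation the inner product is a positive real number attaining the Cauchy--Schwarz bound, one obtains proportionality $S_F^{-1}f_i = \mu_i f_i$ and $g_i = \lambda_i f_i$ for some scalars $\mu_i, \lambda_i$, and the positivity of the inner products forces these scalars to be non-negative reals. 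Note also that $f_i\neq 0$ (else $\|f_i\|\,\|g_i\|=0$, contradicting $\frac{n}{N}>0$). Taking the inner product of each proportionality relation with $f_i$ gives $\lambda_i\|f_i\|^2 = \mu_i\|f_i\|^2 = \frac{n}{N}$, so $\lambda_i=\mu_i$, and hence $g_i = S_F^{-1}f_i$ for every $i$, i.e., $G=S_F^{-1}F$.

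The main obstacle I anticipate is the careful handling of the Cauchy--Schwarz equality case over $\mathbb{C}$. One must use not only that $|\langle f_i,g_i\rangle|=\|f_i\|\,\|g_i\|$ (which gives linear dependence over $\mathbb{C}$) but also that $\langle f_i,g_i\rangle$ is a \emph{positive real} number equal to the product of norms, which upgrades the proportionality scalar from a general complex number to a non-negative real. Once this is in place the rest of the argument is a short computation.
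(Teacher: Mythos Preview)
Your proof is correct, and for the uniqueness part it takes a genuinely different route from the paper.

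For the first assertion (equal norm Parseval frame), your argument is essentially the same as the paper's.

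For the second assertion, the paper writes $G=\{S_F^{-1}f_i+u_i\}_{i=1}^N$ with $\sum_i\langle f,u_i\rangle f_i=0$, deduces from $\|f_i\|\,\|g_i\|=\|f_i\|\,\|S_F^{-1}f_i\|$ that $\|S_F^{-1}f_i+u_i\|=\|S_F^{-1}f_i\|$, expands, sums over $i$, and then uses a trace/duality computation (passing to the Parseval frame $S_F^{-1/2}F$ and its dual $S_F^{-1/2}F+S_F^{-1/2}U$) to show $\sum_i\operatorname{Re}\langle u_i,S_F^{-1}f_i\rangle=0$, whence $\sum_i\|u_i\|^2=0$. Your argument bypasses this entirely: from $\langle f_i,g_i\rangle=\|f_i\|\,\|g_i\|$ and $\langle f_i,S_F^{-1}f_i\rangle=\|f_i\|\,\|S_F^{-1}f_i\|$ (both positive reals, by Theorem~\ref{thm5point1}(i),(ii)), the equality case of Cauchy--Schwarz gives $g_i=\lambda_i f_i$ and $S_F^{-1}f_i=\mu_i f_i$ with $\lambda_i,\mu_i>0$, and comparing inner products with $f_i$ forces $\lambda_i=\mu_i=\dfrac{n}{N\|f_i\|^2}$. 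This is shorter and more elementary---no dual-frame decomposition or trace identities are needed---and it yields as a byproduct the structural fact that each $f_i$ is an eigenvector of $S_F$. The paper's approach, on the other hand, is more in the standard frame-theoretic idiom and does not rely on identifying the equality case of Cauchy--Schwarz over $\mathbb{C}$.
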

 \begin{proof}
Let $(F, S_{F}^{-1}F) \in \mathcal{N}^{(1)},$ where $F$ is any frame. It is well-known that $S_{F}^{-\frac{1}{2}}F$ is a Parseval frame. Further,
\begin{align}\label{E:eqn5.2}
  \left\| S_{F}^{-\frac{1}{2}}f_i \right\|^2 =  \langle f_i, S_{F}^{-1}f_i \rangle =\frac{n}{N},\, 1 \leq i \leq N,
\end{align}
 by Theorem \ref{thm5point1}, which shows that $S_{F}^{-\frac{1}{2}}F$ is an equal norm Parseval frame.
\par
 Suppose, we also assume that $(F,G) \in \mathcal{N}^{(1)}$, where $G$ is a dual of $F$. Then, by Theorem \ref{thm5point1}, we have $\|f_i\|\,\|g_i\|  = \frac{n}{N} = 	\|f_i\|\,\|S_{F}^{-1}f_i\|$ for all $i$, which also suggests that $\|f_i\|\neq 0$ for any $i$. This leads to the relation  $\|g_i\|=\|S_{F}^{-1}f_i\|$ for all $i$. Now, the dual $G$ may be expressed as $G= \left\{ S_{F}^{-1}f_i + u_i  \right\}_{i=1}^N,$ such that $\sum\limits_{i=1}^N\langle f,u_i \rangle f_i =0$, for all $f\in \mathcal{H}$ and in particular, $\sum\limits_{i=1}^N\left\langle S_{F}^{-\frac{1}{2}}f,u_i\right\rangle f_i=0$. We then have $\|S_{F}^{-1}f_i + u_i\|=\|S_{F}^{-1}f_i\|$ and in turn,
 $\|u_i\|^2 + 2 Re \langle u_i,S_{F}^{-1} f_i \rangle =0$, for all $i.$ Further, taking the sum, we get	
\begin{equation} \label{E:eqn5.3}
 \sum\limits_{i=1}^N \|u_i\|^2 + 2 Re \left(\sum\limits_{i=1}^N \left\langle S_{F}^{-\frac{1}{2}}u_i, S_{F}^{-\frac{1}{2}}f_i \right\rangle \right) =0.
\end{equation}
Now, we note that $\sum\limits_{i=1}^N \left\langle f, S_{F}^{-\frac{1}{2}}u_i \right\rangle S_{F}^{-\frac{1}{2}}f_i = S_{F}^{-\frac{1}{2}}\left(\sum\limits_{i=1}^N\left\langle S_{F}^{-\frac{1}{2}}f,u_i\right\rangle f_i  \right) = 0$ for all $f \in \mathcal{H}.$ This implies that $\left\{ S_{F}^{-\frac{1}{2}}f_i + S_{F}^{-\frac{1}{2}}u_i\right\}_{i=1}^N$ is a dual of $\left\{ S_{F}^{-\frac{1}{2}}f_i \right\}_{i=1}^N$. Moreover, using  \eqref{eqn2point1} and \eqref{E:eqn5.2}, we obtain
 $\sum\limits_{i=1}^N \left\langle S_{F}^{-\frac{1}{2}}u_i, S_{F}^{-\frac{1}{2}}f_i \right\rangle=0$. Therefore, from \eqref{E:eqn5.3}, we may conclude that $u_i=0$, for all $i$ and hence, $G = S_{F}^{-1}F. $
\end{proof}

 \begin{thm}
	Let $(F,G) $ be a dual pair in $\mathcal{H}$ and $U$ be a unitary operator on $\mathcal{H}.$ Then, $(F,G) \in \mathcal{N}^{(1)}$ if and only if $(UF,UG) \in  \mathcal{N}^{(1)}.$
\end{thm}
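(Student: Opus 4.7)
The plan is to reduce the statement to the characterization already established in Theorem \ref{thm5point1}(i), which says that membership in $\mathcal{N}^{(1)}$ is governed solely by the quantities $\|f_i\|\,\|g_i\|$. Since $U$ is unitary, it preserves norms, so $\|Uf_i\|=\|f_i\|$ and $\|Ug_i\|=\|g_i\|$ for every $i$. Thus the condition $\|f_i\|\,\|g_i\|=\frac{n}{N}$ holds for $(F,G)$ if and only if it holds for $(UF,UG)$.

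Before invoking that characterization, however, I need to check that $(UF,UG)$ is itself an $(N,n)$ dual pair so that the hypothesis of Theorem \ref{thm5point1}(i) applies. For any $f\in\mathcal{H}$, writing $f=U(U^{*}f)$ and using that $(F,G)$ is a dual pair,
\begin{equation*}
\sum_{i=1}^{N}\langle f,Uf_i\rangle Ug_i = U\sum_{i=1}^{N}\langle U^{*}f,f_i\rangle g_i = U(U^{*}f) = f,
\end{equation*}
so $UG$ is a dual frame for $UF$. (Since $U$ is an invertible operator on $\mathcal{H}$, both $UF$ and $UG$ are frames for $\mathcal{H}$ with the same number of elements as $F$ and $G$.)

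Combining the two observations, Theorem \ref{thm5point1}(i) yields the equivalence immediately. Alternatively, one may argue directly without appealing to the characterization: for each single-index erasure set $\Lambda=\{i\}$, the formula $\omega(E_{\Lambda,F,G})=\frac{|\langle g_i,f_i\rangle|+\|g_i\|\,\|f_i\|}{2}$ derived in the proof of Theorem \ref{thm5point1} is visibly invariant under the simultaneous replacement $(f_i,g_i)\mapsto(Uf_i,Ug_i)$, since $\langle Ug_i,Uf_i\rangle=\langle g_i,f_i\rangle$ and unitaries preserve norms. Hence $\eta^{(1)}_{UF,UG}=\eta^{(1)}_{F,G}$, and the conclusion follows from the definition of $\mathcal{N}^{(1)}$.

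There is no real obstacle here; the only care needed is the routine verification that $(UF,UG)$ is still a legitimate dual pair so that the definitions and Theorem \ref{thm5point1} apply. The entire argument is a one-line consequence of unitary invariance of inner products and norms.
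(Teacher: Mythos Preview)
Your proposal is correct and essentially matches the paper's own proof: the paper simply notes that $\eta_{F,G}^{(1)}=\max_i\frac{|\langle g_i,f_i\rangle|+\|g_i\|\|f_i\|}{2}$ is unchanged under $(f_i,g_i)\mapsto(Uf_i,Ug_i)$, which is exactly your ``alternative'' argument. Your first route via Theorem~\ref{thm5point1}(i) is an equally valid minor variant, and your explicit verification that $(UF,UG)$ is again a dual pair fills in a detail the paper leaves implicit.
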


\begin{proof}
Clearly,
	\begin{align*}
		\eta_{(F,G)}^{(1)} = \max\limits_{1 \leq i \leq N} \frac{|\langle g_i,f_i\rangle| + \|g_i\|\,\|f_i\|}{2} = \max\limits_{1 \leq i \leq N} \frac{|\langle Ug_i, Uf_i\rangle| + \|Ug_i\|\,\|Uf_i\|}{2} = \eta_{(UF,UG)}^{(1)}
	\end{align*}
Hence, $(F,G) \in \mathcal{N}^{(1)}$ if and only if $(UF,UG) \in  \mathcal{N}^{(1)}.$
\end{proof}

\section{Relations between the optimal dual pairs}
In this section, we analyze the relations between the three types of optimal dual pairs, which have been discussed in this paper. In view of Corollary \ref{thm5point4} and Theorems \ref{prop3point2},  \ref{thm4point3} and \ref{thm5point1}, we obtain the following implications.
\begin{thm}\label{T:6.1}
Let $(F,G)$ be an $(N, n)$ dual pair.
\begin{enumerate}
\item [{\em (i)}] If $(F,G) \in \mathcal{F}^{(1)},$ then  $(F,G) \in \mathcal{R}^{(1)}.$
\item [{\em (ii)}] If $(F,G) \in \mathcal{F}^{(1)}$, then $(F,G) \in \mathcal{N}^{(1)}$ and vice versa.
\item [{\em (iii)}] If $(F,G) \in \mathcal{N}^{(1)}$, then $(F,G) \in \mathcal{R}^{(1)}.$
\end{enumerate}
\end{thm}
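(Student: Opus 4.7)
The plan is to deduce all three parts directly from the three characterization results already established in Sections 3--5, by simply comparing them. The inputs are: Theorem \ref{prop3point2}, which gives $\mathcal{F}^{(1)} = \{(F,G) : \|f_i\|\,\|g_i\| = \tfrac{n}{N},\; 1\leq i \leq N\}$; Theorem \ref{thm4point3}, which identifies $\mathcal{R}^{(1)}$ with the $1$-uniform dual pairs, i.e.\ those satisfying $\langle f_i, g_i\rangle = \tfrac{n}{N}$ for every $i$; and Theorem \ref{thm5point1}(i), which gives $\mathcal{N}^{(1)} = \{(F,G) : \|f_i\|\,\|g_i\| = \tfrac{n}{N},\; 1\leq i \leq N\}$.

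For part (i), I would take $(F,G)\in \mathcal{F}^{(1)}$ and invoke Corollary \ref{thm5point4} (which upgrades the equality $\|f_i\|\,\|g_i\| = \tfrac{n}{N}$ to the inner-product identity $\langle f_i,g_i\rangle = \tfrac{n}{N}$ via the trace relation \eqref{eqn2point1} and Cauchy--Schwarz) to conclude that $(F,G)$ is $1$-uniform; Theorem \ref{thm4point3} then places it in $\mathcal{R}^{(1)}$. For part (ii), I would simply observe that the sets $\mathcal{F}^{(1)}$ and $\mathcal{N}^{(1)}$ are described by exactly the same equation $\|f_i\|\,\|g_i\| = \tfrac{n}{N}$ for every $i$ in the two characterization theorems, so the two sets coincide and each inclusion is immediate. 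For part (iii), I would either reduce it to part (i) through the equivalence established in part (ii), or apply Theorem \ref{thm5point1}(ii) directly to pass from $(F,G)\in \mathcal{N}^{(1)}$ to the $1$-uniform condition, and then conclude via Theorem \ref{thm4point3}.

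There is no genuine obstacle; this theorem is essentially a bookkeeping statement that records the implications arising from comparing the three explicit descriptions. The only conceptual point worth noting is that the condition $\|f_i\|\,\|g_i\| = \tfrac{n}{N}$ is strictly stronger than the $1$-uniform condition $\langle f_i,g_i\rangle = \tfrac{n}{N}$, which is precisely why the inclusions in (i) and (iii) are one-directional---an asymmetry already illustrated by Example \ref{Eg:5.2}, where $(F,S_F^{-1}F)$ is $1$-uniform yet not numerically (and hence not Frobenius) optimal.
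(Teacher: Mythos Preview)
Your proposal is correct and mirrors the paper's own treatment exactly: the paper states the theorem as an immediate consequence of Corollary \ref{thm5point4} and Theorems \ref{prop3point2}, \ref{thm4point3}, and \ref{thm5point1}, without any further argument. Your unpacking of how those results combine, together with the remark on Example \ref{Eg:5.2} explaining the one-directionality of (i) and (iii), is precisely the intended logic.
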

The converse of the first and last statements in the above theorem need not always be true, as indicated by Example \ref{Eg:5.2}. However, they are true if $F$ is a tight frame and $G=S_{F}^{-1}F$, which is stated below.

\begin{thm}
Let $F$ be a tight frame for $\mathcal{H}.$ Then the following are equivalent.
\begin{enumerate}
\item [{\em (i)}] $(F, S_{F}^{-1}F)$ is a $1-$erasure optimal dual pair under Frobenius norm.
\item [{\em (ii)}]  $(F, S_{F}^{-1}F)$ is a $1-$erasure spectrally optimal dual pair.
\item [{\em (iii)}]  $(F, S_{F}^{-1}F)$ is a $1-$erasure numerically optimal dual pair.
\end{enumerate}
\end{thm}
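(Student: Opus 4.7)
The plan is to observe that Theorem \ref{T:6.1} already supplies almost all the required implications for arbitrary dual pairs, and only one direction needs the tightness hypothesis. Specifically, Theorem \ref{T:6.1}(ii) gives (i)$\Leftrightarrow$(iii) without any extra assumption on $F$, and Theorem \ref{T:6.1}(i) gives (i)$\Rightarrow$(ii). So the only arrow to establish is (ii)$\Rightarrow$(i) (or equivalently (ii)$\Rightarrow$(iii)), which is exactly where the tightness of $F$ will be used.

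To handle (ii)$\Rightarrow$(i), I would assume $F$ is tight with optimal frame bound $A$, so that $S_F = AI_\mathcal{H}$ and hence $\langle f_i, S_F^{-1}f_i\rangle = A^{-1}\|f_i\|^2$ for every $i$. By Theorem \ref{thm4point3}, spectral optimality of $(F,S_F^{-1}F)$ is equivalent to the $1$-uniform condition $\langle f_i, S_F^{-1}f_i\rangle = n/N$ for all $i$. Combined with the previous identity, this forces $\|f_i\|^2 = An/N$ for every $i$, making $F$ an equal norm tight frame.

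Having reached an equal norm tight frame, I would invoke Theorem \ref{prop3point2}(ii) directly: if $F$ is tight and equal norm, then $(F, S_F^{-1}F) \in \mathcal{F}^{(1)}$, which is precisely statement (i). Combining this with the already-known implications from Theorem \ref{T:6.1} closes the cycle (i)$\Rightarrow$(ii)$\Rightarrow$(i)$\Leftrightarrow$(iii).

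There is no real obstacle here; the content of the theorem is essentially the observation that, for the canonical pair $(F, S_F^{-1}F)$ with $F$ tight, each of the three optimality conditions collapses to the single statement that $F$ is equal norm, so they must all agree. The only care needed is to invoke Theorem \ref{thm4point3} with $G = S_F^{-1}F$ to translate spectral optimality into the equal-norm condition, and then apply Theorem \ref{prop3point2}(ii) to return to Frobenius optimality.
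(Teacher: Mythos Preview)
Your proposal is correct and follows essentially the same strategy as the paper: use Theorem~\ref{T:6.1} to reduce everything to showing (ii)$\Rightarrow$(i), and then exploit tightness to collapse the spectral condition into the Frobenius condition. The only cosmetic difference is that the paper invokes Theorem~\ref{prop3point2}(i) directly via the single identity $\|f_i\|\,\|S_F^{-1}f_i\| = \tfrac{1}{A}\|f_i\|^2 = \langle f_i, S_F^{-1}f_i\rangle$, whereas you pass through the equal-norm characterization and Theorem~\ref{prop3point2}(ii); these are equivalent one-line arguments.
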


\begin{proof} Let $F$ be a tight frame with the optimal frame bound $A.$ In view of Theorem \ref{T:6.1}, all that we need to show is the equivalence between $(F,S_{F}^{-1}F) \in \mathcal{F}^{(1)}$ and $(F,S_{F}^{-1}F) \in \mathcal{R}^{(1)}.$ This is an immediate consequence of Theorems \ref{prop3point2}, \ref{thm4point3} and the following relation:
$$\hspace*{1.2in}\|f_i\| \,\left\| S_{F}^{-1}f_i\right\|=\frac{1}{A}\|f_i\|^2=\langle f_i,  S_{F}^{-1}f_i\rangle,\text{ for every } i.\hspace*{1.2in}\qedhere$$
\end{proof}

\subsection*{Acknowledgment} The first author S. Arati acknowledges the financial support of National Board for Higher Mathematics, Department of Atomic Energy, Government of India.

\end{document}